\documentclass[a4paper]{article}
\usepackage[british]{babel}
\usepackage[utf8]{inputenc}
\usepackage{csquotes}
\usepackage[a4paper, total={5.5in, 8.5in}]{geometry}

\usepackage{amsmath,amssymb,amsthm}
\usepackage{dsfont}
\usepackage{tikz}
\usepackage{tikz-cd}

\usepackage{algorithm}
\usepackage{algpseudocode}
\usepackage{hyperref}
\usepackage{cleveref}
\usepackage{float}
\usepackage{enumerate}

%
%
%
%
\title{The Reidemeister spectra of low dimensional almost-crystallographic groups}
\author{Sam Tertooy\thanks{Supported by long term structural funding -- Methusalem grant of the Flemish Government. }}
\date{\today}

\theoremstyle{plain}
\newtheorem{theorem}{Theorem}[section]
\newtheorem{lemma}[theorem]{Lemma}
\newtheorem{proposition}[theorem]{Proposition}
\newtheorem{corollary}[theorem]{Corollary}

\theoremstyle{definition}
\newtheorem{definition}[theorem]{Definition}

\theoremstyle{remark}



\DeclareMathOperator{\Aut}{Aut}

\DeclareMathOperator{\Aff}{Aff}

\DeclareMathOperator{\GL}{GL}

\DeclareMathOperator{\tr}{tr}


\newcommand{\I}{\mathds{1}}



\DeclareMathOperator{\Spec}{Spec}

\DeclareMathOperator{\Fix}{Fix}


\newcommand{\NN}{\mathbb{N}}
\newcommand{\ZZ}{\mathbb{Z}}

\newcommand{\RR}{\mathbb{R}}


\newcommand{\lie}[1]{{\mathfrak {#1}}}
\newcommand{\comm}[2]{\left[#1,#2\right]} 

\newenvironment{smallpmatrix}
{\left(\begin{smallmatrix}}
	{\end{smallmatrix}\right)}

\hyphenation{Rei-de-meis-ter}
\hyphenation{crys-tal-lo-gra-phic}

\newcommand*\Let[2]{\State #1 \(:=\) #2}

\begin{document}
	
	\maketitle
	
	\begin{center}
		This is an Accepted Manuscript of an article published by Taylor \& Francis in Experimental Mathematics on 11 Jul 2019, available online:  \href{https://doi.org/10.1080/10586458.2019.1636426}{https://www.tandfonline.com/10.1080/10586458.2019.1636426}.
	\end{center}

\begin{abstract}
We determine which non-crystallographic, almost-crystallographic groups of dimension 4 have the \(R_\infty\)-property. We then calculate the Reidemeister spectra of the 3-dimensional almost-crystallographic groups and the 4-dimensional almost-Bieberbach groups.
\end{abstract}

\section{Introduction}
Let \(G\) be any group and \(\varphi: G \to G\) an endomorphism of this group. Define an equivalence relation \(\sim_\varphi\) on \(G\) given by
\begin{equation*}
\forall g,g' \in G: g \sim_\varphi g' \iff \exists h \in G: g = hg'\varphi(h)^{-1}.
\end{equation*}
An equivalence class \([g]_\varphi\) is called a \emph{Reidemeister class} of \(\varphi\) or \(\varphi\)-\emph{twisted conjugacy class}. The \emph{Reidemeister number} \(R(\varphi)\) is the number of Reidemeister classes of \(\varphi\) and is therefore always a positive integer or infinity. The \emph{Reidemeister spectrum} of a group \(G\) is the set of all Reidemeister numbers when considering  all possible automorphisms of that group:
	\begin{equation*}
	\Spec_R(G) := \{R(\varphi) \mid \varphi \in \Aut(G)\}.
	\end{equation*}
	If \(\Spec_R(G) = \{\infty\}\) we say that \(G\) has the \emph{\(R_\infty\)-property}.
	
Reidemeister numbers originate in Nielsen fixed point theory, where they are defined as the number of fixed point classes of a self-map of a topological space \cite{jian83-1}, although they also yield applications in algebraic geometry and representation theory \cite{ft15-1}.

It turns out that many (infinite) groups admit the \(R_\infty\)-property.  This is also the case for most almost-crystallographic groups, e.g. in \cite{dp11-1} it was shown that \(207\) of the \(219\) 3-dimensional crystallographic groups and \(15\) of the \(17\) families of 3-dimensional (non-crystallographic) almost-crystallographic groups all have the \(R_\infty\)-property. Furthermore, in \cite{dkt17-2} it was shown that \(4692\) of the \(4783\) 4-dimensional crystallographic groups admit the \(R_\infty\) property. Moreover, the Reidemeister spectra of all crystallographic groups of dimensions \(1\), \(2\) and \(3\) were calculated, as well as the spectra of the \(4\)-dimensional Bieberbach groups. In this paper we extend these results by studying the 4-dimensional almost-crystallographic groups.

This paper is structured as follows. In the next two sections, we provide the necessary preliminaries on Reidemeister numbers and almost-crystallographic groups. In section 4 we determine which almost-crystallographic groups of dimension \(4\) possess the \(R_\infty\)-property. Sections 5 and 6 are devoted to calculating the Reidemeister spectra of the 3-dimensional almost-crystallographic groups and the 4-dimensional almost-Bieberbach groups respectively. The final section summarises the obtained results.

\section{Reidemeister numbers and spectra}
In this section we introduce basic notions concerning the Reidemeister number. For a general reference on Reidemeister numbers and their connection to fixed point theory, we refer the reader to  \cite{jian83-1}.

The definitions of the Reidemeister number and Reidemeister spectrum were given in the introduction. However, nothing was said on how we actually determine whether a group has the \(R_\infty\)-property, and if not, how we calculate its Reidemeister spectrum. The following lemma is an essential tool for the former.

\begin{lemma}[see {\cite[Section 2.2]{ft15-1}}, {\cite[Lemma 1.1]{gw09-2}}]
	\label{lem:diagram3props}
	Let \(N\) be a normal subgroup of a group \(G\) and \(\varphi \in \Aut(G)\) with \(\varphi(N) = N\). We denote the restriction of \(\varphi\) to \(N\) by \(\varphi|_N\), and the induced automorphism on the quotient \(G/N\) by \(\varphi'\). We then get the following commutative diagram with exact rows:
	\begin{center}
		\begin{tikzcd}
		1 \arrow{r} & N \arrow{r}{}\arrow{d}{\varphi|_N} & G                 \arrow{r}{}\arrow{d}{\varphi}      & G/N           \arrow{r}\arrow{d}{\varphi'} & 1 \\
		1 \arrow{r} & N \arrow{r}{}                & G             \arrow{r}{}      & G/N        \arrow{r} & 1
		\end{tikzcd}
	\end{center}
	We obtain the following properties:\begin{enumerate}[(1)]
		\item \(R(\varphi) \geq R(\varphi')\),
		\item if \(R(\varphi') < \infty\), \(R(\varphi|_N) = \infty\) and \(| \Fix(\varphi')| < \infty\), then \(R(\varphi) = \infty\).
	\end{enumerate}
\end{lemma}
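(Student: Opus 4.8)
The plan is to track Reidemeister classes through the quotient map $p \colon G \to G/N$ and then control the fibres of the induced map on class sets.

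For part (1), I would work directly on the level of Reidemeister classes and define $\hat p\colon [g]_\varphi \mapsto [p(g)]_{\varphi'}$. This is well-defined because twisted conjugacy is preserved by $p$: if $g = h\,g'\,\varphi(h)^{-1}$, then applying $p$ and using $p\varphi = \varphi' p$ gives $p(g) = p(h)\,p(g')\,\varphi'(p(h))^{-1}$, so $p(g)\sim_{\varphi'} p(g')$. Since $p$ is surjective, $\hat p$ is surjective, and a surjection between the two class sets immediately yields $R(\varphi)\geq R(\varphi')$.

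For part (2), the strategy is to show that all the information behind $R(\varphi|_N)=\infty$ survives, up to a bounded collapse governed by $\Fix(\varphi')$, already inside the single fibre $\hat p^{-1}([1]_{\varphi'})$. First I would check that every $\varphi$-class in this fibre has a representative in $N$: if $p(g)\sim_{\varphi'} 1$, lift the conjugating element to some $h\in G$ so that $g = n\,h\varphi(h)^{-1}$ with $n\in N$, and twisted-conjugating by $h^{-1}$ sends $g$ to $h^{-1}nh\in N$. Conversely, I would note the constraint on the conjugating element: if $n,n'\in N$ satisfy $n = h\,n'\,\varphi(h)^{-1}$ in $G$, then projecting and using $p(n)=p(n')=1$ forces $\varphi'(p(h))=p(h)$, so $p(h)\in\Fix(\varphi')$ and hence $h$ lies in $S := p^{-1}(\Fix(\varphi'))$.

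The crux is then a counting argument via a group action. I would let $S$ act on $N$ by twisted conjugation $h\cdot n = h\,n\,\varphi(h)^{-1}$; one verifies this lands back in $N$ precisely because $p(h)$ is fixed by $\varphi'$, and that it is a genuine left action. By the previous step its orbits are exactly the sets $[n]_\varphi\cap N$, hence are in bijection with the $\varphi$-classes in the fibre $\hat p^{-1}([1]_{\varphi'})$, while the sub-action of $N\trianglelefteq S$ has the $\varphi|_N$-classes as orbits. Thus $S/N\cong\Fix(\varphi')$ permutes the set of $\varphi|_N$-classes, the orbits of this permutation action being exactly the fibre classes, and since each such orbit has size at most $|\Fix(\varphi')|$ we obtain
\[
R(\varphi)\ \geq\ \bigl|\hat p^{-1}([1]_{\varphi'})\bigr|\ \geq\ \frac{R(\varphi|_N)}{|\Fix(\varphi')|}.
\]
When $R(\varphi|_N)=\infty$ and $|\Fix(\varphi')|<\infty$, the right-hand side is infinite, forcing $R(\varphi)=\infty$.

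I expect the main obstacle to be the bookkeeping in the action argument: correctly verifying that the $S$-action is well-defined, that its orbits on $N$ coincide with the $\varphi$-classes meeting $N$, and being careful with the left/right conventions in the twisted-conjugacy formula. The reduction to representatives in $N$ and the identification of the conjugating elements with $S=p^{-1}(\Fix(\varphi'))$ are the conceptual keys; once those are in place, the inequality is just a finite-index counting statement.
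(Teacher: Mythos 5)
Your proof is correct. The paper itself contains no proof of this lemma --- it is quoted from the literature \parencite{gw09-2,ft15-1} --- and your argument (well-definedness and surjectivity of \([g]_\varphi \mapsto [p(g)]_{\varphi'}\) for (1); for (2), reducing the fibre over \([1]_{\varphi'}\) to representatives in \(N\) and letting \(S = p^{-1}(\Fix(\varphi'))\) act by twisted conjugation, so that the \(\varphi|_N\)-classes fuse into \(\varphi\)-classes in bundles of size at most \(|S/N| = |\Fix(\varphi')|\)) is precisely the standard argument given in those cited sources.
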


A direct consequence for characteristic subgroups is the following:

\begin{corollary}
	\label{cor:charRoo}
	Let \(N\) be a characteristic subgroup of \(G\). If either
	\begin{enumerate}[(1)]
		\item the quotient \(G/N\) has the \(R_\infty\)-property, or
		\item \(N\) has finite index in \(G\) and has the \(R_\infty\)-property,
	\end{enumerate}
	then \(G\) has the \(R_\infty\)-property as well.
\end{corollary}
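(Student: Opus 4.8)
The plan is to derive the corollary directly from Lemma~\ref{lem:diagram3props} by observing that a characteristic subgroup is in particular preserved by every automorphism of $G$. Given any $\varphi \in \Aut(G)$, since $N$ is characteristic we automatically have $\varphi(N) = N$, so the hypotheses of the lemma are satisfied and we obtain the commutative diagram with the induced maps $\varphi|_N$ and $\varphi'$. The goal in each case is to show $R(\varphi) = \infty$ for every such $\varphi$, which is exactly the statement that $G$ has the $R_\infty$-property.

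For case (1), I would invoke part (1) of the lemma, namely $R(\varphi) \geq R(\varphi')$. Since $G/N$ has the $R_\infty$-property and $\varphi'$ is an automorphism of $G/N$, we have $R(\varphi') = \infty$, and hence $R(\varphi) = \infty$ as well. This case is essentially immediate. For case (2), I would use part (2) of the lemma. Since $N$ is characteristic and hence $\varphi$-invariant, $\varphi|_N$ is an automorphism of $N$; as $N$ has the $R_\infty$-property, $R(\varphi|_N) = \infty$. It then remains to verify the second hypothesis of part (2), namely $|\Fix(\varphi')| < \infty$. This is where the finite-index assumption enters: since $[G:N] < \infty$, the quotient $G/N$ is a finite group, so $\Fix(\varphi')$ is a subset of a finite group and is therefore automatically finite. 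With both hypotheses of part (2) in place, we conclude $R(\varphi) = \infty$.

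The only step requiring any thought at all is recognising why the finite-index hypothesis is needed precisely to guarantee $|\Fix(\varphi')| < \infty$ in case (2); everything else is a direct citation of the two parts of the preceding lemma together with the defining property of a characteristic subgroup. I do not anticipate a genuine obstacle here, since the corollary is designed as an immediate specialisation of the lemma, and the argument consists of checking that each hypothesis of the relevant part of the lemma follows from the stated assumptions.
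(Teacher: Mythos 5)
Your proof is correct and matches the argument the paper intends: the corollary is stated there as an immediate consequence of Lemma~\ref{lem:diagram3props}, with characteristicity supplying $\varphi(N)=N$ for every automorphism, part (1) of the lemma handling case (1), and part (2) handling case (2) via the observation that finite index makes $G/N$ finite and hence $|\Fix(\varphi')|<\infty$. No gaps.
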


\section{Almost-crystallographic groups}
Let \(G\) be a connected, simply connected, nilpotent Lie group with automorphism group \(\Aut(G)\). The affine group \(\Aff(G)\) is the semi-direct product \(\Aff(G) = G \rtimes \Aut(G)\), where multiplication is defined by \((d_1,D_1)(d_2,D_2) = (d_1D_1(d_2),D_1D_2)\). 
If \(C\) is a maximal compact subgroup of \(\Aut(G)\), then \(G \rtimes C\) is a subgroup of \(\Aff(G)\). A cocompact discrete subgroup \(\Gamma\) of \(G \rtimes C\) is called an \emph{almost-crystallographic group} modelled on the Lie group \(G\). The dimension of \(\Gamma\) is defined as the dimension of \(G\).

If \(\Gamma\) is torsion-free, then it is called an \emph{almost-Bieberbach group}. If \(G = \RR^n\), then it is called a \emph{crystallographic group}, or a \emph{Bieberbach group} if it also torsion-free. 

Crystallographic groups were historically studied first, and are well understood by the three Bieberbach theorems. These theorems have since been generalised to al\-most-crys\-tal\-lo\-gra\-phic groups, which we will briefly discuss below. We refer to \cite{szcz12-1} and \cite{deki96-1} for more information on the original and generalised theorems respectively.

The generalised first Bieberbach theorem says that if \(\Gamma\subseteq \Aff(G)\) is an \(n\)-dimensional almost-crystallographic group, then its \emph{translation subgroup} \(N := \Gamma\cap G\) is a uniform lattice of \(G\) and is of finite index in \(\Gamma\). Moreover, \(N\) is the unique maximal nilpotent normal subgroup of \(\Gamma\), and is therefore characteristic in \(\Gamma\). The quotient group \(F := \Gamma/N\) is a finite group called the \emph{holonomy group} of \(\Gamma\). In fact \(F=\{ A\in \Aut(G)\;|\; \exists a \in G:\, (a,A)  \in \Gamma\}\). If \(\Gamma\) is crystallographic (\(G = \RR^n\)), we may assume that \(N = \ZZ^n\) and \(F\) is a subgroup of \(\GL_n(\ZZ)\).

The generalised second Bieberbach theorem tells us more about automorphisms of al\-most-crys\-tal\-lo\-gra\-phic groups.
 
\begin{theorem}[generalised second Bieberbach theorem]
	\label{thm:gensecbieb}
	Let \(\varphi: \Gamma \to \Gamma\) be an automorphism of an almost-crystallographic group \(\Gamma \subseteq \Aff(G)\) with holonomy group \(F\). Then there exists a \((d,D) \in \Aff(G)\) such that \(\varphi(\gamma) = (d,D) \circ  \gamma \circ (d,D)^{-1}\) for all \(\gamma \in \Gamma\). To shorten notation, we will write \(\varphi = \xi_{(d,D)}\).
\end{theorem}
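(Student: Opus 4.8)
The plan is to show that every automorphism of an almost-crystallographic group $\Gamma$ extends to an affine conjugation of the ambient group $\Aff(G)$. The key structural fact to exploit is the generalised first Bieberbach theorem: the translation subgroup $N = \Gamma \cap G$ is characteristic in $\Gamma$, so any automorphism $\varphi \in \Aut(\Gamma)$ must satisfy $\varphi(N) = N$. This means $\varphi$ restricts to an automorphism $\varphi|_N$ of the uniform lattice $N$, and descends to an automorphism $\varphi'$ of the finite holonomy group $F = \Gamma/N$, placing us exactly in the setting of \Cref{lem:diagram3props}.

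First I would handle the lattice. Since $N$ is a uniform lattice in the simply connected nilpotent Lie group $G$, a Mal'cev-type rigidity result guarantees that $\varphi|_N : N \to N$ extends uniquely to a Lie group automorphism $D \in \Aut(G)$; that is, there is a unique $D$ with $D|_N = \varphi|_N$. This $D$ is the candidate for the linear part of the desired affine map. The point is that $\Aut(N)$ and $\Aut(G)$ are tightly linked because $N$ is Zariski-dense (cocompact) in $G$, so the extension is forced and compatible with the group operation.

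Next I would produce the translational part $d \in G$ and verify that $(d,D)$ conjugates all of $\Gamma$, not merely $N$. For a general element $\gamma = (a,A) \in \Gamma$, the conjugate $(d,D)\,\gamma\,(d,D)^{-1}$ is again an affine transformation whose linear part is $DAD^{-1}$; I would compare this with $\varphi(\gamma)$ on the subgroup $N$ (where the two already agree by construction of $D$) and use that an affine transformation is determined by its effect on the cocompact lattice together with its linear part. The freedom in choosing $d$ comes from the fact that $\varphi$ and conjugation by $(0,D)$ differ by an inner-type adjustment; solving for $d$ amounts to matching the translational discrepancy, which lies in $G$ because $\Gamma \subseteq G \rtimes C \subseteq \Aff(G)$. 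One must also check the resulting $\xi_{(d,D)}$ genuinely maps $\Gamma$ into $\Gamma$ bijectively, but since it agrees with the automorphism $\varphi$ on the generating data and both restrict to the same map on the finite-index subgroup $N$, this follows.

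The main obstacle I anticipate is the rigidity step: carefully justifying that $\varphi|_N$ extends to $D \in \Aut(G)$ and that conjugation by $(d,D)$ reproduces $\varphi$ on all of $\Gamma$ rather than only on $N$. The subtlety is that knowing two homomorphisms agree on a finite-index subgroup does not by itself force them to agree globally for a general group, so the argument must genuinely use the affine/nilpotent geometry — specifically that affine maps agreeing on a cocompact lattice with the same linear part coincide everywhere. I expect the cleanest route is to cite the generalised Bieberbach machinery (e.g.\ \parencite{deki96-1}), which packages precisely this rigidity, rather than to reprove the Mal'cev rigidity from scratch.
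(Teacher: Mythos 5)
The paper does not actually prove this theorem: it imports it from the literature (it is the rigidity theorem of Lee and Raymond; see \parencite{deki96-1}), so the only question is whether your outline would itself amount to a proof. Your first step is sound and is indeed how the standard argument begins: \(N = \Gamma \cap G\) is the unique maximal nilpotent normal subgroup, hence characteristic, so \(\varphi(N)=N\), and Mal'cev rigidity extends \(\varphi|_N\) uniquely to an automorphism of \(G\). But note a real inaccuracy already here: conjugation by \((d,D)\) restricted to \(N\) is not \(D|_N\) but \(\mu(d)\circ D|_N\), since \((d,D)(n,1)(d,D)^{-1} = (d\,D(n)\,d^{-1},1)\), where \(\mu(d)\) denotes conjugation by \(d\) in \(G\). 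So the linear part \(D\) of the affine element you are seeking is in general \emph{not} the Mal'cev extension of \(\varphi|_N\) — it agrees with it only up to the inner automorphism \(\mu(d)\) — and your claim that \(\varphi(\gamma)\) and \((d,D)\gamma(d,D)^{-1}\) ``already agree on \(N\) by construction of \(D\)'' fails once \(d\) is nontrivial.

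The decisive gap, however, is the existence of a single \(d \in G\) that works simultaneously for all \(\gamma \in \Gamma\). Writing \(\varphi(a,A) = (b_{(a,A)}, B_{(a,A)})\), the desired identity \(\varphi(\gamma) = (d,D)\gamma(d,D)^{-1}\) unwinds to the twisted cocycle equation \(b_{(a,A)} = d\, D(a)\, \bigl(DAD^{-1}\bigr)(d)^{-1}\), and the substance of the theorem is that the associated (nonabelian) \(1\)-cocycle of the finite holonomy group \(F\) with values in \(G\) is a coboundary. Your appeal to ``an affine transformation is determined by its effect on the cocompact lattice together with its linear part'' is a \emph{uniqueness} statement and cannot produce \(d\). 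In the crystallographic case \(G = \RR^n\) existence follows from the classical averaging argument (\(H^1(F;\RR^n)=0\) for \(F\) finite); for nonabelian nilpotent \(G\) one must induct along the upper central series of \(G\), using vanishing of \(H^1\) and \(H^2\) of \(F\) with vector-space coefficients, or invoke the existence and uniqueness of Seifert constructions due to Lee and Raymond — and this step is nowhere sketched in your proposal. Your stated fallback, to ``cite the generalised Bieberbach machinery'' of \parencite{deki96-1}, is circular in this context, since that machinery \emph{is} the theorem to be proved; as it stands your proposal reduces to the citation the paper already makes, preceded by a correct but incomplete first step.
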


An automorphism \(\Phi: G \to G\) of a Lie group \(G\) induces an automorphism \(\Phi_*: \lie{g} \to \lie{g}\) of the associated Lie algebra \(\lie{g}\). We will henceforth always denote an induced automorphisms on a Lie algebra with a star (\(*\)) subscript, for example \(A_*\) is the Lie algebra automorphism induced by some \(A \in F\) where \(F \subseteq \Aut(G)\) is the holonomy group of an almost-crystallographic group. In particular, an automorphism \(\varphi = \xi_{(d,D)}\) of an almost-crystallographic group has an associated matrix \(D_*\).

The generalised third Bieberbach theorem is less straightforward to generalise. Unlike for crystallographic groups, it is not true that there are only finitely many \(n\)-dimensional almost-crystallographic groups for a given dimension \(n\). However, we can state that for a given finitely generated torsion-free nilpotent group \(N\), there are (up to isomorphism) only finitely many almost-crystallographic groups \(\Gamma\) such that the translation subgroup of \(\Gamma\) is isomorphic to \(N\). 

In \cite[Section 2.5]{deki96-1}, this generalisation is proved using the concept of an \emph{isolator}, which shall prove useful to us as well.
\begin{definition}
	Let \(G\) be a group with subgroup \(H\). The isolator of \(H\) in \(G\) is defined as
	\begin{equation*}
	\sqrt[G]{H} := \{g \in G \mid g^k \in H \textrm{ for some } k \geq 1\}.
	\end{equation*}
\end{definition}
Although much can be said about isolators, for the purposes of this paper we only care about a very specific result.
\begin{lemma}[see {\cite[Lemma 2.4.2]{deki96-1}}]
	\label{lem:quotientbyisolatorAC}
	Let \(\Gamma\) be an almost-crystallographic group with translation subgroup \(N\) of nilpotency class \(c\). Then the isolator \(\sqrt[N]{\gamma_c(N)} \leq Z(N)\) is a characteristic subgroup of \(\Gamma\). Moreover, the quotient group \(\Gamma / \sqrt[N]{\gamma_c(N)}\) is an almost-crystallographic group whose translation subgroup \(N/\sqrt[N]{\gamma_c(N)}\) has nilpotency class \(c-1\). If \(c = 2\), then this quotient is a crystallographic group.
\end{lemma}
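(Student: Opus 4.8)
The plan is to dispose first of the purely algebraic assertions about the subgroup \(M := \sqrt[N]{\gamma_c(N)}\), and then to realise the quotient geometrically inside an affine group over a Lie group of lower nilpotency class. Since \(N\) has class \(c\) we have \(\comm{\gamma_c(N)}{N} = \gamma_{c+1}(N) = 1\), so \(\gamma_c(N) \le Z(N)\); and because centralisers, hence the centre, are isolated in a torsion-free nilpotent group, monotonicity of the isolator gives \(M = \sqrt[N]{\gamma_c(N)} \le \sqrt[N]{Z(N)} = Z(N)\). The isolator is idempotent, so \(M\) is isolated in \(N\), whence \(N/M\) is again torsion-free; it is plainly finitely generated and nilpotent, and since \(\gamma_c(N) \subseteq M\) we get \(\gamma_c(N/M) = \gamma_c(N)M/M = 1\), so its class is \(< c\). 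Finally \(N\) is characteristic in \(\Gamma\) by the first Bieberbach theorem, the term \(\gamma_c\) is preserved by every automorphism of \(N\), and an automorphism commutes with forming isolators; hence \(M\) is characteristic in \(\Gamma\).

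It remains to show that \(\Gamma/M\) is almost-crystallographic with translation subgroup \(N/M\), and here I would argue geometrically. Set \(Z := \gamma_c(G)\), the last nonzero term of the lower central series of the ambient Lie group \(G\): this is a connected, closed, central, characteristic subgroup, and \(\bar G := G/Z\) is simply connected nilpotent of class \(c-1\). Standard Mal'cev theory identifies \(M = N \cap Z\) and shows that \(M\) is a lattice in \(Z\), so \(\bar N := NZ/Z \cong N/M\) is a lattice in \(\bar G\). As \(Z\) is characteristic, every \(A \in \Aut(G)\) descends to \(\bar A \in \Aut(\bar G)\), giving a homomorphism \(q\colon G \rtimes C \to \bar G \rtimes \Aut(\bar G)\); the image of the maximal compact \(C\) is a compact subgroup, which I enlarge to a maximal compact \(C'\). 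The plan is to prove that \(q\) restricts on \(\Gamma\) to a map with kernel exactly \(M\) and image a cocompact discrete subgroup of \(\bar G \rtimes C'\).

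The main obstacle is the computation of this kernel: I must show that if \((a,A) \in \Gamma\) with \(a \in Z\) and \(\bar A = \id\), then already \(A = \id\), so that \(\ker(q|_\Gamma) = M\) and \(\Gamma/M \cong q(\Gamma)\). Passing to the Lie algebra, \(\bar A = \id\) means \(A_* - I\) has image in \(\lie z = \gamma_c(\lie g)\); since \(\lie z\) is central, expanding \(A_*\comm{u}{v} = \comm{A_* u}{A_* v}\) shows that \(A_*\) fixes \(\gamma_2(\lie g) \supseteq \lie z\) pointwise, so \(A_* - I\) is nilpotent and \(A_*\) is unipotent. But \(A \in C\) lies in a compact group, so \(A_*\) is semisimple; a unipotent semisimple automorphism is the identity, whence \(A = \id\). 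With the kernel identified, \(q(\Gamma)\) contains \(\bar N \cong N/M\) as a finite-index subgroup; as \(\bar N\) is a lattice in \(\bar G\) and \(C'\) is compact, \(q(\Gamma)\) is discrete and cocompact in \(\bar G \rtimes C'\), i.e. almost-crystallographic, with translation subgroup \(q(\Gamma) \cap \bar G = \bar N = N/M\). The final assertion is then immediate: if \(c = 2\) the class of \(N/M\) is \(< 2\), so \(N/M\) is finitely generated torsion-free abelian and \(\bar G = \RR^m\), and an almost-crystallographic group modelled on \(\RR^m\) is by definition crystallographic.
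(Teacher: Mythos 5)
The paper itself contains no proof of this lemma: it is quoted verbatim from \parencite[Lemma 2.4.2]{deki96-1}, so your attempt can only be compared with the standard argument given there --- and it is a correct, essentially faithful reconstruction of it. The algebraic half (isolating \(\gamma_c(N)\) inside the isolated centre, idempotence of the isolator giving torsion-freeness of \(N/M\), and characteristicity via characteristicity of \(N\) in \(\Gamma\)) is exactly right, and the geometric half follows the same route as the source: pass to \(\bar{G} = G/\gamma_c(G)\), use the Mal'cev-theoretic identification \(\sqrt[N]{\gamma_c(N)} = N \cap \gamma_c(G)\) as a lattice of \(\gamma_c(G)\), and kill the kernel of the induced map via the unipotent-plus-semisimple argument. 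Two small glosses are worth recording. First, your unipotency step uses \(\gamma_c(\lie{g}) \subseteq \gamma_2(\lie{g})\), i.e.\ tacitly \(c \geq 2\); this is harmless since for \(c=1\) the statement degenerates, but it should be said. Second, ``\(q(\Gamma)\) is discrete because \(\bar{N}\) is a lattice and \(C'\) is compact'' is a touch quick as stated: the clean justification is that your kernel computation gives \(q(\Gamma) \cap \bar{G} = \bar{N}\) exactly, with finite quotient \(q(\Gamma)/\bar{N} \cong F\), so the projection of \(q(\Gamma)\) to \(\Aut(\bar{G})\) is a finite group, and any net in \(q(\Gamma)\) converging to the identity eventually has trivial holonomy part, hence eventually lies in the discrete \(\bar{N}\); cocompactness then follows since \(\bar{G}/\bar{N}\) and \(C'\) are compact. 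One cosmetic difference from the source: where you invoke compactness of \(C\) to conclude that \(A_*\) is semisimple, Dekimpe uses the finiteness of the holonomy group (finite order implies semisimple) --- the same mechanism, since finite subgroups are compact.
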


We will now give the most important results for Reidemeister theory applied to almost-crystallographic groups. A first result allows us to easily determine whether an almost-crystallographic group admits the \(R_\infty\)-property or not.

\begin{theorem}[see {\cite[Corollary 3.10]{dp11-1}}]
	\label{thm:det1-AD}
	Let \(\Gamma\) be an \(n\)-di\-men\-sion\-al almost-crystallographic group with holonomy group \(F \subseteq \Aut(G)\) and \(\varphi = \xi_{(d,D)} \in \Aut(\Gamma)\) (where we use the notation of \cref{thm:gensecbieb}). Then
	\begin{align*}
	&R(\varphi) = \infty \\
	&\iff \exists A \in F \text{ such that } \det(\I_n - A_*D_*) = 0\\
	&\iff \exists A \in F \text{ such that } A_*D_* \text{ has eigenvalue }1.
	\end{align*}
\end{theorem}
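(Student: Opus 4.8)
The plan is to reduce the general twisted-conjugacy count on $\Gamma$ to an averaging formula over the finite holonomy group $F$, and then read off when that formula forces $R(\varphi) = \infty$. First I would set up the short exact sequence $1 \to N \to \Gamma \to F \to 1$ from the generalised first Bieberbach theorem, noting that $N = \Gamma \cap G$ is the characteristic translation subgroup and that by \cref{thm:gensecbieb} the automorphism $\varphi = \xi_{(d,D)}$ is conjugation by an affine map $(d,D)$, so $\varphi(N) = N$. This puts us exactly in the situation of \cref{lem:diagram3props}, with $\varphi|_N$ and the induced automorphism $\varphi'$ on the finite quotient $F$.

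The key computational step is to evaluate $R(\varphi|_N)$, the Reidemeister number of the restriction to the lattice $N$. Since $N$ is a uniform lattice in the simply connected nilpotent Lie group $G$, its twisted conjugacy behaviour is governed by the induced Lie algebra automorphism $D_*$. I would use the standard fact that for such a lattice the Reidemeister number of an automorphism induced by $D$ is finite if and only if $\det(\I_n - D_*) \neq 0$, in which case it equals $|\det(\I_n - D_*)|$; an eigenvalue $1$ of $D_*$ is precisely the degenerate case giving $R(\varphi|_N) = \infty$. This is the abelianised/Mal'cev analogue of the familiar $\RR^n$ computation, and it is where the linear-algebraic condition $\det(\I_n - A_*D_*) = 0$ first enters.

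Next I would account for the holonomy. Because $F$ is finite, twisted conjugacy classes of $\varphi$ on all of $\Gamma$ are obtained by partitioning $\Gamma$ over the cosets of $N$ and summing contributions; concretely one expects a Reidemeister-number formula of the shape $R(\varphi) = \sum_{[A]} R\!\left(\text{twisted restriction to the coset } A\right)$, where the sum runs over the $\varphi'$-twisted classes in $F$ and each summand is the Reidemeister number of an affine map whose linear part is $A_* D_*$. Each such summand is finite exactly when $\det(\I_n - A_*D_*) \neq 0$ and infinite (by the same lattice argument applied to the coset, via the normal subgroup $N$ of finite index) when $A_*D_*$ has eigenvalue $1$. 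Thus the existence of a single $A \in F$ with $\det(\I_n - A_*D_*) = 0$ makes at least one summand infinite, forcing $R(\varphi) = \infty$; conversely, if no such $A$ exists every summand is finite and the finite sum is finite. The equivalence of the two displayed conditions, $\det(\I_n - A_*D_*) = 0 \iff A_*D_* \text{ has eigenvalue } 1$, is immediate since $\det(\I_n - M) = 0$ says exactly that $1$ is a root of the characteristic polynomial of $M$.

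The main obstacle will be justifying the coset-by-coset decomposition rigorously rather than just heuristically: one must show that each coset of $N$ in $\Gamma$ either contributes a finite number of twisted classes (computable via the lattice formula with linear part $A_*D_*$) or contributes infinitely many, and that these cannot cancel. The clean way is to invoke \cref{lem:diagram3props}(2): taking $N$ as the normal subgroup, if $R(\varphi|_N) = \infty$ and $|\Fix(\varphi')| < \infty$ (automatic since $F$ is finite) then $R(\varphi) = \infty$; but the subtlety is that an eigenvalue $1$ of $A_*D_*$ for $A \neq \I$ corresponds to degeneracy on a coset rather than on $N$ itself, so I would either conjugate to move that coset into the role of $N$ or appeal directly to the Reidemeister-number formula of \parencite{dp11-1} for almost-crystallographic groups, which already encodes this sum. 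Making that reduction precise — ensuring the eigenvalue condition genuinely captures all cosets uniformly — is the technical heart of the argument.
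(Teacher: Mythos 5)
The paper itself gives no proof of this theorem: it is imported verbatim from \parencite{dp11-1}, so your proposal can only be measured against that source's argument, and your outline is indeed the same route (the lattice criterion on \(N\) plus a coset-by-coset analysis over the finite holonomy group). Two of your ingredients are sound as stated: the equivalence \(\det(\I_n - M) = 0 \iff 1 \in \mathrm{spec}(M)\) is trivial, and the ``standard fact'' that an automorphism \(\psi\) of a finitely generated torsion-free nilpotent group has \(R(\psi) = |\det(\I_n - \psi_*)|\) when this is nonzero and \(R(\psi) = \infty\) otherwise is correct (provable by induction along the lower central series). But the step you yourself flag as the technical heart is genuinely missing, and one of your two proposed escape routes --- ``appeal directly to the Reidemeister-number formula of \parencite{dp11-1}'' --- is circular, since that paper is precisely the source of the theorem being proved. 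Moreover, your asserted equality \(R(\varphi) = \sum_{[A]} R(\text{twisted restriction to the coset } A)\) is false in general: the classes of a twisted restriction lying over a fixed class \([\bar{A}]_{\varphi'}\) can fuse in \(\Gamma\) (the fibre is a set of orbits under a finite group action), so only the inequality \(R(\varphi) \leq \sum_{[\bar{A}]_{\varphi'}} R\bigl((\tau_\gamma \circ \varphi)|_N\bigr)\) holds --- fortunately that upper bound is all the ``no degenerate \(A\) implies \(R(\varphi) < \infty\)'' direction needs.

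The missing mechanism for the hard direction is the following pair of facts, neither of which appears in your sketch. First, for any \(\gamma \in \Gamma\), right translation \(x \mapsto x\gamma^{-1}\) is a bijection between the \(\varphi\)-classes and the \((\tau_\gamma \circ \varphi)\)-classes, whence \(R(\varphi) = R(\tau_\gamma \circ \varphi)\); choosing \(\gamma = (a,A) \in \Gamma\) with holonomy part \(A\) gives \(\tau_\gamma \circ \varphi = \xi_{(a,A)(d,D)} = \xi_{(aA(d),\,AD)}\), which is again an automorphism of \(\Gamma\) in the normalised form of \cref{thm:gensecbieb}, so the degenerate coset really has been ``moved into the role of \(N\)'' and \cref{lem:diagram3props}(2) applies (finiteness of \(\Fix\) on the quotient is automatic as \(F\) is finite). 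Second, the restriction \((\tau_\gamma \circ \varphi)|_N\) induces \(\mathrm{Ad}(aA(d)) \circ A_*D_*\) on \(\lie{g}\), not \(A_*D_*\) itself, so you need the lemma that \(\det(\I_n - \mathrm{Ad}(g)\Psi_*) = \det(\I_n - \Psi_*)\) for every \(g \in G\): this holds because \(\Psi_*\) preserves the lower central series of \(\lie{g}\) while \(\mathrm{Ad}(g)\) is unipotent and acts trivially on the associated graded algebra, so the determinant may be computed gradedwise. Without this lemma your per-coset criterion ``linear part \(A_*D_*\)'' is not even well defined, since different representatives \(\gamma\) and \(\gamma n\) of the same coset yield different affine conjugations; with these two pieces supplied, your outline closes and coincides with the argument of \parencite{dp11-1}.
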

The second result only holds for almost-Bieberbach groups, and allows for an easy computation of the Reidemeister number of an automorphism.

\begin{theorem}[averaging formula, see {\cite[Theorem 6.11]{hlp12-1}} and {\cite[Theorem 4.3]{ll09-1}}]
	\label{thm:averagingalmostbieb}
	Let \(\Gamma\) be an \(n\)-dimensional almost-Bieberbach group with holonomy group \(F \subseteq \Aut(G)\), and \(\varphi = \xi_{(d,D)} \in \Aut(\Gamma)\) with \(R(\varphi) < \infty\). Then
	\begin{equation*}
	R(\varphi) = \frac{1}{\#F}\sum_{A \in F}|\det(\I_n - A_*D_*)|.
	\end{equation*}
\end{theorem}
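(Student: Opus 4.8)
The plan is to translate the statement into topology and prove it as an averaging formula for Nielsen numbers. Since $\Gamma$ is almost-Bieberbach, it is torsion-free, so it acts freely on $G$ and $M := \Gamma \backslash G$ is an infra-nilmanifold with $\pi_1(M) = \Gamma$; the torsion-freeness is essential here, as it is what makes $M$ a closed manifold rather than an orbifold. By \cref{thm:gensecbieb} the automorphism $\varphi = \xi_{(d,D)}$ is realised by the affine self-map $f : M \to M$ induced by $(d,D) \in \Aff(G)$, and the induced map on $\pi_1$ is $\varphi$. Fixed-point classes of $f$ are in canonical bijection with the $\varphi$-twisted conjugacy classes, so the algebraic Reidemeister number equals the geometric one: $R(\varphi) = R(f)$.

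Next I would use the finiteness hypothesis to reduce to the Nielsen number. By \cref{thm:det1-AD}, $R(\varphi) < \infty$ is equivalent to $\det(\I_n - A_* D_*) \neq 0$ for every $A \in F$. I would then show that under this nondegeneracy every fixed-point class of $f$ is essential, so that $R(f) = N(f)$, and establish the averaging formula for $N(f)$ by lifting to the nilmanifold cover $\hat M := N \backslash G \to M$, whose deck group is the holonomy group $F$ (this lift exists precisely because $N$ is characteristic, hence $\varphi$-invariant). Concretely, the lifts of $f$ to the universal cover $G$ are the affine maps $(a,A)(d,D)$ with $(a,A) \in \Gamma$; such a lift has linear part $(AD)_* = A_* D_*$ on the Lie algebra, and its fixed-point index is computed from the linearisation as $\sign \det(\I_n - A_* D_*)$. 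Partitioning the twisted conjugacy classes according to this holonomy part $A$ and counting orbits of the residual $N$-action produces the factor $\tfrac{1}{|F|}$ and the sum $\sum_{A \in F} |\det(\I_n - A_* D_*)|$, which is the desired formula.

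The main obstacle is this last bookkeeping step: one must verify that the fixed-point classes on $M$ correspond with the correct multiplicity to the lifted maps on $G$, that the index of each essential class is genuinely $\sign\det(\I_n - A_* D_*)$, and that summing these signed indices and then passing to absolute values is justified by essentiality. Keeping track of how the $F$-action permutes the lifted fixed-point sets, so that no class is over- or under-counted and the factor $\tfrac{1}{|F|}$ emerges exactly, is where the genuine work lies; the Lie-algebra determinant computations and the realisation of $\varphi$ as an affine map are comparatively routine given the Bieberbach theorems quoted above.
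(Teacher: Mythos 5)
The paper does not prove this theorem at all: it is imported verbatim from the cited sources \parencite{hlp12-1,ll09-1}, so there is no internal argument to compare yours against. Your outline is, in substance, the proof carried out in those references: pass to the infra-nilmanifold \(M = \Gamma\backslash G\) (torsion-freeness is indeed what makes this a manifold, and is why the formula can fail for general almost-crystallographic groups, as the paper notes), realise \(\varphi\) by an affine self-map via \cref{thm:gensecbieb}, identify \(R(\varphi)\) with \(R(f)\), and then combine the averaging formula for Nielsen numbers over the lifts \(\bar f_A\) (indexed by the deck group \(F\) of the cover \(N\backslash G \to M\)) with the Anosov-type computation \(N(\bar f_A) = |\det(\I_n - A_*D_*)|\) on the nilmanifold cover. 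Two points deserve sharpening. First, your reduction \(R(f) = N(f)\) needs slightly more than ``every fixed-point class is essential'': \(R(f)\) counts lifting classes, including those with empty fixed-point set, so you must also rule out empty classes; this follows because when \(\det(\I_n - (AD)_*) \neq 0\) each affine lift \((a,A)(d,D)\) actually has a fixed point on \(G\) (solvable via the exponential diffeomorphism), with index \(\sign\det(\I_n - A_*D_*)\) as you say. Second, your final bookkeeping paragraph conflates two equivalent but distinct mechanisms: the cleaner route is the covering-space averaging \(N(f) = \tfrac{1}{|F|}\sum_{A \in F} N(\bar f_A)\), where the factor \(\tfrac{1}{|F|}\) arises because each fixed-point class of \(f\) is covered by \(|F|\) lifts, rather than a direct orbit count of a residual \(N\)-action on twisted conjugacy classes; either can be made rigorous, but you should commit to one. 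With those repairs the sketch is a faithful reconstruction of the cited proof.
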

In general, this formula does not hold for automorphisms of almost-crystallographic groups, examples can be found in \cite{dkt17-2} and later in this paper. Therefore, the calculation of the Reidemeister spectra usually requires a deeper understanding of how the Reidemeister classes are formed in a specific group.

\section{The \(R_\infty\)-property for 4-dimensional al\-most-crys\-tal\-lo\-graphic groups}
\label{sec:Rinfty4D}
Every almost-crystallographic group of dimension \(1\) or \(2\) is crystallographic. In \cite{dp11-1} it was determined which \(3\)-dimensional almost-crystallographic groups admit the \(R_\infty\)-property. We extend these results to dimension \(4\). In this case the translation subgroup \(N\) is a finitely generated, torsion-free, nilpotent group of rank \(4\) and nilpotency class at most \(3\). Nilpotency class \(1\) is of course the crystallographic case, which was done in \cite{dkt17-2}.

\subsection{Nilpotency class \(2\)}
\label{sec:2stepnilpclass4}
Let \(\Gamma\) be an almost-crystallographic group whose translation subgroup \(N\) is a nilpotent group of rank \(4\) and nilpotency class \(2\). In \cite{deki96-1} it was shown that \(N\) can be given the following presentation:
\begin{equation*}
\left\langle e_1,e_2,e_3,e_4 \;\Bigg\rvert \begin{array}{ll}
\comm{e_2}{e_1} = 1 & \comm{e_3}{e_2} = e_1^{l_1}\\
\comm{e_3}{e_1} = 1 & \comm{e_4}{e_2} = e_1^{l_2}\\
\comm{e_4}{e_1} = 1 & \comm{e_4}{e_3} = e_1^{l_3}
\end{array}\right\rangle.
\end{equation*}
Moreover, let \(G\) be the Lie group that \(\Gamma\) is modelled on. By \cite[Theorem 4.1]{deki95-1}, there exists a faithful affine representation \(\lambda: G \rtimes \Aut(G) \to \Aff(\RR^4)\) such that its restriction to \(\Gamma\) is again a faithful affine representation. In particular, 
\begin{align*}
\lambda(e_1) &= \begin{pmatrix}
1 & 0 & 0 & 0 & 1\\
0 & 1 & 0 & 0 & 0\\
0 & 0 & 1 & 0 & 0\\
0 & 0 & 0 & 1 & 0\\
0 & 0 & 0 & 0 & 1
\end{pmatrix},&\quad
\lambda(e_2) &= \begin{pmatrix}
1 & 0 & -\frac{l_1}{2} & -\frac{l_2}{2} &0\\
0 & 1 & 0 & 0 & 1\\
0 & 0 & 1 & 0 & 0\\
0 & 0 & 0 & 1 & 0\\
0 & 0 & 0 & 0 & 1
\end{pmatrix},\\&&&\\
\lambda(e_3) &= \begin{pmatrix}
1 & \frac{l_1}{2}& 0 & -\frac{l_3}{2} &0\\
0 & 1 & 0 & 0 & 0\\
0 & 0 & 1 & 0 & 1\\
0 & 0 & 0 & 1 & 0\\
0 & 0 & 0 & 0 & 1
\end{pmatrix},&\quad
\lambda(e_4) &= \begin{pmatrix}
1 & \frac{l_2}{2} & \frac{l_3}{2} & 0 & 0\\
0 & 1 & 0 & 0 & 0\\
0 & 0 & 1 & 0 & 0\\
0 & 0 & 0 & 1 & 1\\
0 & 0 & 0 & 0 & 1
\end{pmatrix},
\end{align*}
where the values of \(l_1\), \(l_2\) and \(l_3\) are determined by the relations \([e_3,e_2] = e_1^{l_1}\), \([e_4,e_2] = e_1^{l_2}\) and \([e_4,e_3] = e_1^{l_3}\).

\Cref{lem:quotientbyisolatorAC} tells us that the subgroup \(\langle e_1 \rangle = \sqrt[N]{\gamma_2(N)}\) is characteristic and the quotient \(\Gamma' := \Gamma/\langle e_1\rangle\) is a \(3\)-dimensional crystallographic group. Using \cref{cor:charRoo}, we know that if \(\Gamma'\) has the \(R_\infty\)-property, then so does \(\Gamma\). In \cite{deki96-1,de02-1} the almost-crystallographic groups were classified into families based on which crystallographic group \(\Gamma'\) is. Since only twelve \(3\)-dimensional crystallographic groups do not have the \(R_\infty\)-property, we need only consider the corresponding twelve families of \(4\)-dimensional almost-crystallographic groups. 

Each of these families can be split in smaller subfamilies, determined by the action of \(F\) on \( \sqrt[N]{\gamma_2(N)}\): every \(A \in F\) acts on \(e_1\) by \(^A e_1 = e_1^{\epsilon_A}\) with \(\epsilon_A \in \{-1,1\}\). The following proposition quickly deals with the subfamilies where \(F\) does not act trivially on \(\sqrt[N]{\gamma_2(N)}\).

\begin{proposition}
	\label{prop:r4c2Facttrivially}
	Let \(\Gamma\) be an almost-crystallographic group with translation subgroup \(N\) of rank \(4\) and nilpotency class \(2\), and holonomy group \(F\). If \(F\) acts non-trivially on \(\sqrt[N]{\gamma_2(N)}\), then \(\Gamma\) has the \(R_\infty\)-property.
\end{proposition}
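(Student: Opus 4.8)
The plan is to show that \emph{every} automorphism $\varphi = \xi_{(d,D)} \in \Aut(\Gamma)$ satisfies $R(\varphi) = \infty$, using the criterion of \cref{thm:det1-AD}: it suffices to exhibit, for each such $\varphi$, an $A \in F$ for which $A_* D_*$ has eigenvalue $1$. The entire argument takes place on the one-dimensional commutator subalgebra of the Lie algebra $\lie{g}$ of $G$.

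First I would record the structure of $\lie{g}$. From the presentation of $N$, all commutators lie in $\langle e_1\rangle$ and at least one $l_i$ is nonzero (else $N$ would be abelian, contradicting class $2$), so if $X_1,\dots,X_4$ denote the Lie-algebra elements corresponding to $e_1,\dots,e_4$, then $\comm{\lie{g}}{\lie{g}} = \langle X_1\rangle$ is exactly one-dimensional and central. Being automorphisms, each $A_*$ (for $A \in F$) and $D_*$ preserves this line; moreover $A_* X_1 = \epsilon_A X_1$ with $\epsilon_A \in \{-1,1\}$, matching the action $^A e_1 = e_1^{\epsilon_A}$.

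The key step is to pin down $D_* X_1$. Since $\langle e_1\rangle$ is characteristic and isomorphic to $\ZZ$, we have $\varphi(e_1) = e_1^{\delta}$ for some $\delta \in \{-1,1\}$. Writing $e_1 = (v_1,\I)$ with $v_1 = \exp(X_1)$ and unwinding $\varphi(e_1) = (d,D)\,e_1\,(d,D)^{-1}$, the centrality of $v_1$ in $G$ makes the $d$-conjugation disappear, leaving $\varphi(e_1) = (\exp(D_* X_1),\I)$. Hence $D_* X_1 = \delta X_1$, i.e. $D_*$ acts on the commutator line by exactly the same sign $\delta$ by which $\varphi$ fixes or inverts $e_1$. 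This identification is the one computation that needs genuine care, and it is where I expect the only real friction.

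Finally I would combine the signs. On the invariant line $\langle X_1\rangle$ the map $A_* D_*$ acts by $\epsilon_A\,\delta$, so it has eigenvalue $1$ as soon as $\epsilon_A = \delta$. The hypothesis that $F$ acts non-trivially on $\sqrt[N]{\gamma_2(N)}$ means that $\{\epsilon_A \mid A \in F\}$ is the full group $\{-1,1\}$; since $\delta \in \{-1,1\}$, some $A \in F$ realises $\epsilon_A = \delta$ (taking $A = \I$ when $\delta = 1$, and an inverting $A$ when $\delta = -1$). For that $A$, the matrix $A_* D_*$ has eigenvalue $1$, so \cref{thm:det1-AD} yields $R(\varphi) = \infty$. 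As $\varphi$ was arbitrary, $\Gamma$ has the $R_\infty$-property.
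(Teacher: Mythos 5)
Your proof is correct and takes essentially the same approach as the paper: both arguments fix an arbitrary $\varphi = \xi_{(d,D)}$, note that $A_*$ and $D_*$ act on the invariant line through $X_1$ (in the paper, the first column of the matrices) by signs $\epsilon_A$ and $\nu = \delta$ respectively, and then pick $A \in F$ with $\epsilon_A = \delta$ --- the identity when $\delta = 1$, an inverting element when $\delta = -1$ --- so that $A_*D_*$ has eigenvalue $1$ and \cref{thm:det1-AD} forces $R(\varphi) = \infty$. Your explicit verification that $D_*X_1 = \delta X_1$, using centrality of $e_1$ to kill the conjugation by $d$, is just the coordinate-free form of the matrix shapes the paper writes down.
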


\begin{proof}
	Let \(A \in F\) arbitrary and \(\varphi = \xi_{(d,D)} \in \Aut(\Gamma)\). Since \(A\) acts on \(\langle e_1\rangle = \sqrt[N]{\gamma_2(N)}\) by \(^A e_1 = e_1^{\epsilon_A}\) with \(\epsilon_A \in \{-1,1\}\) and \(\varphi(e_1) = e_1^\nu\) with \(\nu \in \{-1,1\}\), \(A_*\) and \(D_*\) must have the following forms: 
	\begin{equation*}
	A_* = \begin{pmatrix}
	\epsilon_A & *&*&*\\
	0 & *&*&*\\
	0 &* &* &*\\
	0 & * & *&  *\\
	\end{pmatrix},\quad
	D_* = \begin{pmatrix}
	\nu &* &*&*\\
	0 &* &*&*\\
	0 & *& *&*\\
	0 & *&*&*\\
	\end{pmatrix}.
	\end{equation*}
	Thus, \(\I_4-A_*D_*\) is of the form
\begin{equation*}
\I_4-A_*D_* = \begin{pmatrix}
1 - \nu \epsilon_A &* &*&*\\
0 &* &*&*\\
0 & *& *&*\\
0 & *&*&*\\
\end{pmatrix}.
\end{equation*}
Now let us look at specific \(A\in F\). First, let \(A\) be the neutral element of \(F\), which necessarily acts trivially on \(e_1\). The above matrix then has upper left entry \(1-\nu\), hence \(\det(\I_4-D_*) \neq 0\) if and only if \(\nu = -1\).

Second, let \(A\) be an element of \(F\) for which \(\epsilon_A = -1\). Such element exists since we assumed \(F\) acts non-trivially on \(\sqrt[N]{\gamma_2(N)}\). Then the matrix \(\I_4-A_*D_*\) has upper left entry \(1 + \nu\), and \(\det(\I_4-A_*D_*) \neq 0\) if and only if \(\nu = 1\).

As \(\nu\) cannot be \(-1\) and \(1\) at the same time, we always have some \(A \in F\) for which \(\det(\I_4-A_*D_*) = 0\), and by \cref{thm:det1-AD} this means that \(R(\varphi) = \infty\). Since this holds for any automorphism, \(\Gamma\) has the \(R_\infty\)-property.
\end{proof}
From the proof of the theorem above, we can also conclude the following:
\begin{proposition}
	\label{prop:r4c2e1mapstoinverse}
		Let \(\Gamma\) be an almost-crystallographic group with translation subgroup \(N\) of rank \(4\) and nilpotency class \(2\), and let \(e_1\) be a generator of \(\sqrt[N]{\gamma_2(N)}\). If \(\varphi \in \Aut(\Gamma)\) has finite Reidemeister number, then \(\varphi(e_1) = e_1^{-1}\). 
\end{proposition}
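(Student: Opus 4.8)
The plan is to reuse the matrix description established in the preceding proof, but now to extract information from the neutral element of \(F\) alone. First I would observe that, by \cref{lem:quotientbyisolatorAC}, the subgroup \(\langle e_1\rangle = \sqrt[N]{\gamma_2(N)}\) is characteristic in \(\Gamma\) and infinite cyclic. Consequently any automorphism \(\varphi\) restricts to an automorphism of \(\langle e_1\rangle \cong \ZZ\), so it must send \(e_1\) to a generator; hence \(\varphi(e_1) = e_1^{\nu}\) with \(\nu \in \{-1,1\}\). The whole proposition then reduces to ruling out the case \(\nu = 1\) under the hypothesis \(R(\varphi) < \infty\).

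Next, exactly as in the previous proof, the relation \(\varphi(e_1) = e_1^{\nu}\) forces the associated matrix \(D_*\) to have first column \((\nu,0,0,0)^{T}\). I would then take \(A\) to be the neutral element of \(F\), which necessarily acts trivially on \(e_1\), so that \(\epsilon_A = 1\). For this choice \(A_*\) has first column \((1,0,0,0)^{T}\), and therefore \(\I_4 - D_*\) has first column \((1-\nu,0,0,0)^{T}\). Expanding the determinant along this first column gives \(\det(\I_4 - D_*) = (1-\nu)\,M\), where \(M\) is the determinant of the lower-right \(3\times 3\) block of \(\I_4 - D_*\).

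Finally I would invoke \cref{thm:det1-AD}. If \(\nu = 1\), then the factor \(1-\nu\) vanishes, so \(\det(\I_4 - D_*) = 0\); since the neutral element lies in \(F\), this already exhibits an \(A \in F\) with \(\det(\I_4 - A_*D_*) = 0\), whence \(R(\varphi) = \infty\). As we are assuming \(R(\varphi) < \infty\), the case \(\nu = 1\) is impossible, and since \(\nu \in \{-1,1\}\) we conclude \(\nu = -1\), i.e.\ \(\varphi(e_1) = e_1^{-1}\).

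I do not anticipate any genuine obstacle here: the statement is essentially a corollary of the matrix form derived in the preceding proposition. The only point requiring a moment's care is recognising that the neutral element of \(F\) is always available and always acts trivially on \(e_1\), so the \((1-\nu)\) factor in \(\det(\I_4 - D_*)\) is forced irrespective of how the rest of \(F\) acts on \(\sqrt[N]{\gamma_2(N)}\); this is precisely what lets us conclude without any assumption on the holonomy action.
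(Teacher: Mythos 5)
Your proposal is correct and takes essentially the same route as the paper: the paper states this proposition as an immediate consequence of the neutral-element case in the proof of the preceding proposition, i.e.\ \(\varphi(e_1)=e_1^{\nu}\) with \(\nu\in\{-1,1\}\) because \(\langle e_1\rangle=\sqrt[N]{\gamma_2(N)}\) is characteristic, and taking \(A\) to be the identity of \(F\) gives \(\det(\I_4-D_*)=(1-\nu)\cdot M\) with \(M\) the lower-right \(3\times 3\) minor, so \(\nu=1\) would force \(R(\varphi)=\infty\) by \cref{thm:det1-AD}. Your closing remark that the neutral element is always available, so no hypothesis on the holonomy action on \(\sqrt[N]{\gamma_2(N)}\) is needed, is precisely why the paper can state the proposition without the non-triviality assumption of the preceding one.
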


We will number the twelve families under consideration according to the crystallographic group \(\Gamma / \sqrt[N]{\gamma_2(N)}\), using the classification in the International Tables in Crystallography \cite{aroy16-1}: they are families \(1\)-\(5\), \(16\), \(19\), \(22\)-\(24\), \(143\) and \(146\). When we write \(\Gamma_{n/m}\), we mean the \(n\)-dimensional crystallographic group with IT-number \(m\).

Using the techniques in \cite[Section 5.4]{deki96-1}, we find that for an almost-crys\-tal\-lo\-gra\-phic group belonging to one of the families \(16\), \(19\) or \(22\)-\(24\), \(F\) acting trivially on \(\sqrt[N]{\gamma_2(N)}\) implies that the group is actually crystallographic. Therefore we may omit these families and we are left with only \(7\) families to study.

Note that the presentations given in this paper may vary from those in \cite{deki96-1,de02-1}. Let \(\Gamma\) and \(\lambda\) denote a group and its faithful representation as given in this paper, and let \(\Gamma'\) and \(\mu\) be the corresponding group and representation as given by \cite{deki96-1} or \cite{de02-1}. \Cref{tbl:conjugacymatricesfamilies} contains a matrix \(\delta\) such that 
\begin{equation*}
\lambda(\Gamma) = \delta \mu(\Gamma')\delta^{-1},
\end{equation*}
hence \(\lambda(\Gamma)\) and \(\mu(\Gamma')\) are conjugate subgroups of \(\Aff(\RR^4)\) and therefore \(\Gamma\) and \(\Gamma'\) are isomorphic.

\begin{table}
	\centering
	\begin{tabular}{c|c}
		Family & \(\delta\)\\
		\hline\\[\dimexpr-\normalbaselineskip+2pt]
		1,2&\(\begin{smallpmatrix}
		1 & 0 & 0 & 0 & 0\\
		0 & 1 & 0 & 0 & 0\\
		0 & 0 & 1 & 0 & 0\\
		0 & 0 & 0 & 1 & 0\\
		0 & 0 & 0 & 0 & 1
		\end{smallpmatrix}\)\\[\dimexpr+12pt]
		3,4&\(\begin{smallpmatrix}
			1 & 0 & 0 & 0 & 0\\
			0 & 0 & 1 & 0 & 0\\
			0 & 1 & 0 & 0 & 0\\
			0 & 0 & 0 & 1 & 0\\
			0 & 0 & 0 & 0 & 1
		\end{smallpmatrix}\)\\[\dimexpr+12pt]
		5&\(\begin{smallpmatrix}
		1 & 0 & 0 & 0 & 0\\
		0 & 1 & 0 & 0 & 0\\
		0 & 1 & 1 & 0 & 0\\
		0 & 0 & 0 & 1 & 0\\
		0 & 0 & 0 & 0 & 1
		\end{smallpmatrix}\)\\[\dimexpr+12pt]
		143&\(\begin{smallpmatrix}
		1 & 0 & 0 & 0 & 0\\
		0 & 0 & 0 & 1 & 0\\
		0 & 1 & 0 & 0 & 0\\
		0 & 0 & 1 & 0 & 0\\
		0 & 0 & 0 & 0 & 1
		\end{smallpmatrix}\)\\[\dimexpr+12pt]
		146&\(\begin{smallpmatrix}
		1 & -\frac{k_1}{2} +k_2+2k_3& -k_2+k_3 & 0 & 0\\
		0 & 1 & 0 & 0 & 0\\
		0 & -1 & 1 & 0 & 0\\
		0 & -1 & 0 & 1 & 0\\
		0 & 0 & 0 & 0 & 1
		\end{smallpmatrix}\)
	\end{tabular}
	\caption{Conjugacy matrices between representations}
	\label{tbl:conjugacymatricesfamilies}
\end{table}

\paragraph{Family 1.}  This family consists of the finitely generated, torsion-free, nilpotent groups of nilpotency class \(2\) and rank \(4\).
It was shown in \cite[Section 3.2]{dtv18-1} that these groups do not have the \(R_\infty\)-property.

\paragraph{Family 2.} Every group in this family has a presentation of the form

\begin{equation*}
\left\langle e_1,e_2,e_3,e_4,\alpha \;\Bigg\rvert \begin{array}{ll}
\comm{e_2}{e_1} = 1 & \alpha e_1 = e_1 \alpha\\
\comm{e_3}{e_1} = 1 & \alpha e_2 = e_1^{k_4}e_2^{-1}\alpha \\
\comm{e_4}{e_1} = 1 & \alpha e_3 =  e_1^{k_5}e_3^{-1}\alpha\\
\comm{e_3}{e_2} = e_1^{k_1}& \alpha e_4 = e_1^{k_6}e_4^{-1}\alpha \\
\comm{e_4}{e_2} = e_1^{k_2} & \alpha^2 = e_1^{k_7}\\
\comm{e_4}{e_3} = e_1^{k_3} & 
\end{array}\right\rangle,
\end{equation*}
and the faithful representation \(\lambda\) is given by
\begin{equation*}
\lambda(\alpha) = \begin{pmatrix}
1 & k_4 & k_5 & k_6 & \frac{k_7}{2}\\
0 & -1 & 0 & 0 & 0\\
0 & 0 & -1 & 0 & 0\\
0 & 0 & 0 & -1 & 0\\
0 & 0 & 0 & 0 & 1
\end{pmatrix}.
\end{equation*}
Set \(k := \gcd(k_1,k_2,k_3)\) and \(g := e_2^{k_3/k}e_3^{-k_2/k}e_4^{k_1/k}\), then the centre \(Z(N)\) of the translation subgroup is generated by \(e_1\) and \(g\). Let \(\varphi: \Gamma \to \Gamma\) be any automorphism. Since \(\langle e_1 \rangle\) and \(Z(N)\) are both characteristic in \(\Gamma\), we have that \(\varphi(g) = g^{\epsilon} e_1^m\) for some \(\epsilon \in \{-1,1\}\) and \(m \in \ZZ\). Consider the induced automorphism \(\varphi' = \xi_{(d',D')}\) on \(\Gamma / \langle e_1 \rangle \cong \Gamma_{3/2}\). Then
\begin{equation*}
\varphi'(g\langle e_1 \rangle) = D'(g\langle e_1 \rangle) = \varphi(g)\langle e_1 \rangle = g^{\epsilon}\langle e_1 \rangle.
\end{equation*}
Depending on the value of \(\epsilon\), \(D'_*\) has either eigenvalue \(1\), in which case \(\det (\I_3 - D'_*) = 0\), or eigenvalue \(-1\), in which case \(\det(\I_3 + D'_*) = 0\). Since the holonomy group of \(\Gamma_{3/2}\) is \(\{\I_3,-\I_3\}\), we obtain by \cref{thm:det1-AD} that \(R(\varphi') = \infty\) and by \cref{lem:diagram3props} that therefore \(R(\varphi) = \infty\). Since this holds for an arbitrary automorphism, \(\Gamma\) has the \(R_\infty\)-property.

\paragraph{Families 3, 4 and 5.} Every group in one of these families has a presentation of the form
\begin{equation*}
\left\langle e_1,e_2,e_3,e_4,\alpha \;\Bigg\rvert \begin{array}{ll}
\comm{e_2}{e_1} = 1 & \alpha e_1 = e_1 \alpha \\
\comm{e_3}{e_1} = 1& \alpha e_2 = e_2\alpha \\
\comm{e_4}{e_1} = 1 & \alpha e_3 = e_1^{k_2}e_2^{-\nu} e_3^{-1}\alpha  \\
\comm{e_3}{e_2} = 1& \alpha e_4 =  e_1^{k_3}e_4^{-1}\alpha\\
\comm{e_4}{e_2} = 1 & \alpha^2 = e_1^{k_4}e_2^{\mu}\\
\comm{e_4}{e_3} = e_1^{k_1} & 
\end{array}\right\rangle,
\end{equation*}
and the faithful representation \(\lambda\) is given by
\begin{equation*}
\lambda(\alpha) = \begin{pmatrix}
1 & 0 & k_2 & k_3 & \frac{k_4}{2}\\
0 & 1 & -\nu & 0 & \frac{\mu}{2}\\
0 & 0 & -1 & 0 & 0 \\
0 & 0 & 0 & -1 & 0\\
0 & 0 & 0 & 0 & 1
\end{pmatrix}.
\end{equation*}
Family \(3\) is given by \(\mu, \nu = 0\), family \(4\) by \(\mu = 1, \nu = 0\) and family \(5\) by \(\mu = 0, \nu = 1\). Define an automorphism \(\varphi = \xi_{(d,D)}\) by
\begin{align*}
&\varphi(e_1) = e_1^{-1},\\
&\varphi(e_2) = e_2^{-1},\\
&\varphi(e_3) = e_1^{k_1 - k_2 - k_3}e_2^\nu e_3e_4^2 ,\\
&\varphi(e_4) = e_1^{3k_1 - k_2 - 2k_3}e_2^\nu e_3^2e_4^3,\\
&\varphi(\alpha) = e_1^{-k_4} e_2^{-\mu}\alpha,
\end{align*}
then \(D_*\) is of the form
\begin{equation*}
D_* = \begin{pmatrix}
-1 & * & * & * \\
0 & -1 & * & *\\
0 & 0 & 1 & 2 \\
0 & 0 & 2 & 3
\end{pmatrix}.
\end{equation*}
We can apply \cref{thm:det1-AD} to show that \(R(\varphi) < \infty\) and hence \(\Gamma\) does not have the \(R_\infty\)-property.

\paragraph{Families 143 and 146.} Every group in one of these families has a presentation of the form
\begin{equation*}
\left\langle e_1,e_2,e_3,e_4,\alpha \;\Bigg\rvert \begin{array}{ll}
\comm{e_2}{e_1} = 1 &\alpha e_1 = e_1 \alpha\\
\comm{e_3}{e_1} = 1&\alpha e_2 = e_2\alpha\\
\comm{e_4}{e_1} = 1&\alpha e_3 = e_1^{k_2}e_4\alpha   \\
\comm{e_3}{e_2} = 1& \alpha e_4 = e_1^{k_3}e_2^\mu e_3^{-1}e_4^{-1}\alpha \\
\comm{e_4}{e_2} = 1 & \alpha^3 = e_1^{k_4}\\
\comm{e_4}{e_3} = e_1^{k_1} & 
\end{array}\right\rangle,
\end{equation*}
and the faithful representation \(\lambda\) is given by
\begin{equation*}
\lambda(\alpha) = \begin{pmatrix}
1 & 0 & k_2 & -\frac{k_1}{2}+k_3  &\frac{k_4}{3}\\
0 & 1 & 0 & \mu & 0\\
0 & 0 & 0 & -1 & 0 \\
0 & 0 & 1 & -1 & 0 \\
0 & 0 & 0 & 0 & 1
\end{pmatrix}.
\end{equation*}
Family \(143\) is given by \(\mu = 0\) and family \(146\) by \(\mu = 1\). Using an argument identical to the proof of \cite[Theorem 4.4, family 13]{dp11-1}, we may conclude that all groups in these families have the \(R_\infty\)-property.

\subsection{Nilpotency class \(3\)}
By an argument analogous to \cite[Example 5.2]{gw09-2}, a finitely-generated, torsion-free, nilpotent group of nilpotency class \(3\) and rank \(4\) has the \(R_\infty\)-property. Applying \cref{cor:charRoo} then proves that every \(4\)-dimensional almost-crystallographic group with translation subgroup of nilpotency class \(3\) has the \(R_\infty\)-property.

\section{The Reidemeister spectra of the 3-dimensional al\-most-crys\-tal\-lo\-gra\-phic groups}

Let \(\Gamma\) be an almost-crystallographic group whose translation subgroup \(N\) is a nilpotent group of rank \(3\) and nilpotency class \(2\). Such \(N\) can be given the following presentation:
\begin{equation*}
\left\langle e_1,e_2,e_3 \;\Big\rvert \begin{array}{ll}
\comm{e_2}{e_1} = 1 & \comm{e_3}{e_2} = e_1^{l_1}\\
\comm{e_3}{e_1} = 1 &  
\end{array}\right\rangle,
\end{equation*}
with \(l_1 > 0\). Moreover, let \(G\) be the Lie group that \(\Gamma\) is modelled on. By \cite[Theorem 4.1]{deki95-1}, there exists a faithful affine representation \(\lambda: G \rtimes \Aut(G) \to \Aff(\RR^3)\) such that its restriction to \(\Gamma\) is again a faithful affine representation. In particular, 
\begin{align*}
 \lambda(e_1) &= \begin{pmatrix}
1 & 0 & 0 & 1\\
0 & 1 & 0 & 0\\
0 & 0 & 1 & 0\\
0 & 0 & 0 & 1
\end{pmatrix},&\quad
\lambda(e_2) &= \begin{pmatrix}
1 & 0 & -\frac{l_1}{2} &0\\
0 & 1 & 0  & 1\\
0 & 0 & 1  & 0\\
0 & 0 & 0  & 1
\end{pmatrix},&\quad
\lambda(e_3) &= \begin{pmatrix}
1 & \frac{l_1}{2}& 0 &0\\
0 & 1 & 0 & 0\\
0 & 0 & 1 & 1\\
0 & 0 & 0 & 1
\end{pmatrix},
\end{align*}
where the value of \(l_1\) is determined by the relation \([e_3,e_2] = e_1^{l_1}\). Like in \cref{sec:2stepnilpclass4}, we have that the subgroup \(\langle e_1 \rangle = \sqrt[N]{\gamma_2(N)}\) is characteristic in \(\Gamma\), and an automorphism \(\varphi\) must satisfy \(\varphi(e_1) = e_1^{-1}\) to have finite Reidemeister number.

As mentioned before, in \cite[Theorem 4.4]{dp11-1} it was shown that there are only \(2\) families of almost-crystallographic groups that do not admit the \(R_\infty\)-property. We again number these families according to the IT-number of the quotient \(\Gamma / \sqrt[N]{\gamma_2(N)}\).

\paragraph{Family 1.}
The groups in this family are exactly the finitely generated, torsion-free, nilpotent groups of nilpotency class \(2\) and rank \(3\). In \cite[Section 3]{roma11-1} it was shown that these groups have Reidemeister spectrum \(2\NN \cup \{\infty\}\). This was shown specifically for the case \(k_1 = 1\), but the argument holds for any \(k_1 > 0\).

\paragraph{Family 2.} Every group in this family has a presentation of the form
\begin{equation*}
\left\langle e_1,e_2,e_3,\alpha \;\Bigg\rvert \begin{array}{ll}
\comm{e_2}{e_1} = 1 &\alpha e_1 = e_1 \alpha\\
\comm{e_3}{e_1} = 1&\alpha e_2 =  e_1^{k_2}e_2^{-1}\alpha\\
\comm{e_3}{e_2} = e_1^{k_1}& \alpha e_3 = e_1^{k_3}e_3^{-1}\alpha \\
&\alpha^2 = e_1^{k_4}
\end{array}\right\rangle,
\end{equation*}
and the faithful representation \(\lambda\) is given by
\begin{equation*}
\lambda(\alpha) = \begin{pmatrix}
1 & k_2 & k_3 & \frac{k_4}{2}\\
0 & -1 & 0 & 0 \\
0 & 0 & -1 & 0\\
0 & 0 & 0 & 1
\end{pmatrix}.
\end{equation*}
Let \(\varphi\) be an automorphism with finite Reidemeister number \(R(\varphi)\). Under the representation \(\lambda\), this automorphism will correspond to a matrix \(\delta \in \Aff(\RR^4)\) such that
\begin{equation*}
\lambda(\varphi(\gamma)) = \delta \lambda(\gamma) \delta^{-1}.
\end{equation*}
for all \(\gamma \in \Gamma\). Since we assumed that \(R(\varphi) < \infty\), we have that \(\varphi(e_1) = e_1^{-1}\). Moreover, \(\varphi\) induces an automorphism \(\varphi'\) on \(\Gamma' := \Gamma/\langle e_1 \rangle\). Thus, \(\delta\) must be of the form
\begin{equation*}
\delta = \begin{pmatrix}
-1 & n_1 & n_2 & 0\\
0 & m_1 & m_3 & d_1/2\\
0 & m_2 & m_4 & d_2/2\\
0 & 0 & 0 & 1
\end{pmatrix},
\end{equation*}
where the constants \(m_i\), \(d_j\) are integers, \(m_1m_4 - m_2m_3 = -1\) and \(n_1, n_2 \in \RR\). Using a computer, one can calculate the (unique) values of \(n_1,n_2\) and \(l_1,l_2,l_3\) such that
\begin{align*}
\delta \lambda(e_2)\delta^{-1} &= \lambda(e_1)^{l_1} \lambda(e_2)^{m_1}\lambda(e_3)^{m_2},\\
\delta \lambda(e_3)\delta^{-1} &= \lambda(e_1)^{l_2} \lambda(e_2)^{m_3}\lambda(e_3)^{m_4},\\
\delta \lambda(\alpha)\delta^{-1} &= \lambda(e_1)^{l_3} \lambda(e_2)^{d_1}\lambda(e_3)^{d_2}\lambda(\alpha).
\end{align*}
From the obtained values of \(l_1\), \(l_2\) and \(l_3\), we get
\begin{align*}
&\varphi(e_1) = e_1^{-1},\\
&\varphi(e_2) = e_1^{\frac{k_1}{2}(m_1m_2 + m_1d_2 - m_2d_1) - \frac{k_2}{2}(m_1+1)-\frac{k_3}{2}m_2}e_2^{m_1}e_3^{m_2},\\
&\varphi(e_3) = e_1^{\frac{k_1}{2}(m_3m_4 + m_3d_2 - m_4d_1) - \frac{k_2}{2}m_3-\frac{k_3}{2}(m_4+1)}e_2^{m_3}e_3^{m_4},\\
&\varphi(\alpha) = e_1^{\frac{k_1}{2}d_1d_2 - \frac{k_2}{2}d_1 - \frac{k_3}{2}d_2 - k_4}e_2^{d_1}e_3^{d_2}\alpha,
\end{align*}
where all exponents must be integers. This places four conditions on the \(m_i\) and \(d_j\):
\begin{enumerate}
	\item[(a)] \( k_1(m_1m_2 + m_1d_2 - m_2d_1) - k_2(m_1+1)-k_3m_2 \equiv 0 \mod 2\),
	\item[(b)] \( k_1(m_3m_4 + m_3d_2 - m_4d_1) - k_2m_3-k_3(m_4+1) \equiv 0 \mod 2\),
	\item[(c)] \( k_1d_1d_2 - k_2d_1 - k_3d_2 \equiv 0 \mod 2\),
	\item[(d)] \( m_1m_4 - m_2m_3 = -1 \).	
\end{enumerate}
For ease of notation, let us set 
\begin{equation*}
M := \begin{pmatrix}
m_1 & m_3\\
m_2 & m_4
\end{pmatrix} \in \GL_2(\ZZ),\quad d := \begin{pmatrix}
d_1\\
d_2
\end{pmatrix} \in \ZZ^2.
\end{equation*}
We will determine \(R(\varphi)\) in a very similar way to the proof of \cite[Proposition 5.11]{dkt17-2}. Let \([x]_{\varphi}\) be a Reidemeister class of \(\Gamma\), then for any \(k \in \ZZ\),
\begin{equation*}
x = (e_1^{-k})x e_1^{2k}\varphi(e_1^{-k})^{-1},
\end{equation*}
therefore \(x \sim_\varphi xe_1^{2k}\) for all \(k \in \ZZ\). Consider the quotient group \(\Gamma' = \Gamma / \langle e_1 \rangle\) and let \(\varphi' = \xi_{(d/2,M)}\) be the induced automorphism on this quotient. Since we assumed that \(R(\varphi) < \infty\), we have that \(R(\varphi') < \infty\) as well. \cite[Proposition 5.10]{dkt17-2} tells us that \(R(\varphi') = |\tr(M)| + O(\I_2 - M,d)\) with
\begin{equation*}
O(A,a) := \# \left\{\bar{x} \in \ZZ_2^2 \mid \bar{A}\bar{x} = \bar{a}\right\},
\end{equation*}
where the bar-notation denotes the element-wise projection to \(\ZZ_2\). 
A Reidemeister class \([x\langle e_1\rangle]_{\varphi'}\) of \(\Gamma'\) will lift to at most \(2\) Reidemeister classes of \(\Gamma\): \([x]_{\varphi}\) and \([xe_1]_{\varphi}\); so the number of lifts is either \(2\) (when \(x \not\sim_\varphi xe_1\)) or \(1\) (when \(x \sim_\varphi xe_1\)). The latter happens if and only if
\begin{equation}
\label{eq:xsimxc}
\exists z \in \Gamma: x e_1 = zx \varphi(z)^{-1}.
\end{equation}
Projecting this to the quotient \(\Gamma'\), we have
\begin{equation}
\label{eq:xsimxcquotient}
\exists z \in \Gamma : x\langle e_1\rangle = zx \varphi(z)^{-1}\langle e_1\rangle.
\end{equation}
Since \(e_1\) is central in \(\Gamma\) and \(x\) appears exactly once on each side of the equality sign in \eqref{eq:xsimxc}, the \(e_1\)-component of \(x\) does not matter. Set \(x = e_2^{x_2}e_3^{x_3}\alpha^{\epsilon_x}\) and \(z = e_1^{z_1}e_2^{z_2}e_3^{z_3}\alpha^{\epsilon_z}\). Let us first assume that \(\epsilon_z = 0\), then \eqref{eq:xsimxcquotient} is equivalent to 
\begin{equation*}
\exists z_2,z_3 \in \ZZ: (\I_2 - AM)\begin{pmatrix}
z_2\\
z_3
\end{pmatrix} = 0,
\end{equation*}
with \(A\) the holonomy part of \(x\langle e_1 \rangle\). As \(R(\varphi') <\infty\), we must have \(z_2 = z_3 = 0\). But then \(z = e_1^{z_1}\), and \eqref{eq:xsimxc} then becomes \(xe_1 = xe_1^{2z_1}\). As \(z_1\) is an integer, this is impossible. So, let us assume that \(\epsilon_z = 1\). Writing out \eqref{eq:xsimxc} component-wise, we find that this condition is equivalent to the following:
\begin{enumerate}
	\item[] There exist \(z_1,z_2,z_3 \in \ZZ\) such that:
	\item[(i)] \(2\begin{pmatrix}
	x_2\\
	x_3
	\end{pmatrix} = \left(\I_2-(-1)^{\epsilon_x}M\right)\begin{pmatrix}
	z_2\\
	z_3
	\end{pmatrix}-(-1)^{\epsilon_x}d\),
	\item[(ii)] \(k_1z_2z_3 - k_2z_2 -k_3z_3-k_4 +1 = 2z_1\).
\end{enumerate}
Condition (i) is independent of the \(e_1\)-components, and hence can be interpreted in terms of the quotient group \(\Gamma'\). In the proof of \cite[Proposition 5.11]{dkt17-2} it was shown that, for a fixed value of \(\epsilon_x\), the number of Reidemeister classes \([x\langle e_1\rangle]_{\varphi'}\) for which a pair \((z_2,z_3)\) satisfying (i) exists is exactly \(O(\I_2-M,d)\), i.e. the number of solutions \((\bar{z}_2,\bar{z}_3) \in \ZZ_2^2\) 	of the linear system of equations
\begin{equation*}
\textrm{(i')\quad} \left(\overline{\I_2-M}\right)\begin{pmatrix}
\bar{z}_2\\
\bar{z}_3
\end{pmatrix} = \bar{d}.
\end{equation*}
Note that the above equation is exactly condition (i) taken modulo \(2\).

Since \(\epsilon_x\) can take two values (\(1\) and \(-1\)), there are in total \(2O(\I_2-M,d)\) Reidemeister classes \([x\langle e_1\rangle]_{\varphi'}\) satisfying condition (i). On the other hand, there are \(|\tr(M)| - O(\I_2-M,d)\) Reidemeister classes of \(\Gamma'\) for which condition (i) does not hold (see \cite[Section 5]{dkt17-2}).

Recall that the variable \(z_1\) appears only in condition (ii). If we have a Reidemeister class \([x\langle e_1\rangle]_{\varphi'}\) and a pair \((z_2,z_3)\) for which (i) holds, then we can find a \(z_1 \in \ZZ\) to make condition (ii) hold if and only if
\begin{equation*}
\textrm{(ii')\quad} \bar{k}_1\bar{z}_2\bar{z}_3 - \bar{k}_2\bar{z}_2 -\bar{k}_3\bar{z}_3-\bar{k}_4 + \bar{1} = \bar{0},
\end{equation*}
which is exactly condition (ii) taken modulo \(2\).

We partition the solutions of (i') into those that do not satisfy condition (ii') and those that do. Let \(S\) be the number of the former and \(T\) the number of the latter, then \(S + T = O(\I_2-M,d)\). Of the  \(2O(\I_2-M,d)\) Reidemeister classes \([x\langle e_1\rangle]_{\varphi'}\)  satisfying condition (i), \(2S\) lift to two distinct Reidemeister classes \([x]_{\varphi}\) and \([xe_1]_{\varphi}\), and \(2T\) lift to a single Reidemeister class \([x]_{\varphi}\). All together, we have
\begin{align*}
R(\varphi) &= 2 (|\tr(M)| - S - T) + 2(2S) + 2T\\
&= 2(|\tr(M)| + S).
\end{align*} 
In particular, we get that \(R(\varphi) \in 2\NN\). Taking the parity of \(\tr(M)\) into account, we can further determine the possible Reidemeister numbers:
\begin{equation*}
R(\varphi) \in \begin{cases}
4\NN + 2S & \text{if }\tr(M) \equiv 0 \pmod 2,\\
4\NN + 2S - 2 & \text{if }\tr(M) \equiv 1 \pmod 2,\\
\end{cases}
\end{equation*}
where
\begin{equation*}
S \leq O(\I_2-M,d) \leq \begin{cases}
	4 & \text{if }\tr(M) \equiv 0 \pmod 2,\\
	1 & \text{if }\tr(M) \equiv 1 \pmod 2.\\
\end{cases}
\end{equation*}
There is one special case, however. If \(M \equiv \I_2 \mod 2\) all entries of \(\I_2 - M\) will be multiples of \(2\); so \(|\det(\I_2-M)| = |\tr(M)| \in 4\NN\) and therefore \(R(\varphi) \in 8\NN + 2S\).

For a fixed group \(\Gamma\) in this family (i.e. a fixed \(4\)-tuple of parameters \((k_1,k_2,k_3,k_4)\)), an automorphism \(\varphi \in \Aut(\Gamma)\) is uniquely determined by the matrix \(M \in \GL_2(\ZZ)\) and the vector \(d \in \ZZ^2\). Our goal is to find out, for each group in the family (or equivalently, for each tuple \((k_1,k_2,k_3,k_4)\)), which \(M\) and \(d\) satisfy conditions (a) - (d) and thus produce an automorphism. 

Conditions (a) - (c) are actually conditions over \(\ZZ_2\), and none of the parameters \(k_i\) appear in condition (d). Therefore, only the parity of the \(k_i\) will play a role, so we need to check \(16\) cases, each corresponding to an element of \(\ZZ_2^4\). Furthermore, a group with parameters \((k_1,k_2,k_3,k_4)\) is isomorphic to the group with parameters \((-k_1,k_3,k_2,k_4)\), which allows us to omit the cases \((0,1,0,0)\), \((0,1,0,1)\), \((1,1,0,0)\) and \((1,1,0,1)\), leaving only \(12\) cases. Rather than trying to find all couples \((M,d)\) (of which there are likely to be infinitely many), we can start by finding all couples \((\bar{M},\bar{d}) \in \GL_2(\ZZ_2) \times \ZZ_2^2\) satisfying conditions (a)-(c).

The function \textsc{MakeList} defined in \cref{alg:spectragammaquotientis2} does exactly this. Moreover, it assigns to every couple a set \(R\), which is the set of possible Reidemeister numbers the corresponding automorphisms can have. The results can be found in tables \ref{tbl:ACfam0000} to \ref{tbl:ACfam1111}. The Reidemeister spectrum of a group is a subset of (or the entirety of) the union of all these sets \(R\).

Next, for each quadruplet of parameters, we tried to find a family of automorphisms whose Reidemeister numbers produce the union of these sets \(R\). We succeeded in this for every choice of parameters, hence the Reidemeister spectrum always equals the union of the \(R\). These automorphisms and their Reidemeister spectra, for all \((k_1,k_2,k_3,k_4)\), can be found in \cref{tbl:ACspectra}. For the sake of brevity, we omitted \(\infty\) from the spectra in this table.

We may thus conclude that, depending on the parity of the parameters \(k_1\), \(k_2\), \(k_3\) and \(k_4\), the Reidemeister spectrum is \(2\NN \cup \{\infty\}\), \(4\NN \cup \{\infty\}\), \((4\NN-2) \cup \{\infty\}\) or \((2\NN +2) \cup \{\infty\}\). Note that all almost-Bieberbach groups have parameters with parities \((0,0,0,1)\) and therefore have spectrum \(2\NN \cup \{\infty\}\).

\begin{algorithm}
	\caption{\textsc{MakeList} function}
	\label{alg:spectragammaquotientis2}
	\begin{algorithmic}[1]
		\Function{MakeList}{$k_1,k_2,k_3,k_4$}
		\Let{AutList}{\(\varnothing\)}
		\For{ \(\bar{M} \in \GL_2(\ZZ_2)\), \(\bar{d} \in \ZZ_2^2\)}
		\If{conditions (1), (2), (3) are met}
		\Let{\(S\)}{0}
		\For{\(\bar{z} \in \ZZ_2^2\)}
		\If{\(\bar{z}\) satisfies (i') but not (ii')}
		\Let{\(S\)}{\(S + 1\)}
		\EndIf
		\EndFor
		\If{\(\tr(M)\equiv 0 \mod 2\)}
		\If{\(M \equiv \I_2 \mod 2\)}
		\Let{\(R\)}{\(8\NN + 2S\)}
		\Else
		\Let{\(R\)}{\(4\NN + 2S\)}
		\EndIf
		\Else
		\Let{\(R\)}{\(4\NN + 2S-2\)}
		\EndIf
		\Let{AutList}{AutList \( \cup \left\{(\bar{M},\bar{d},R)\right\} \)}
		\EndIf
		\EndFor
		\State \Return AutList
		\EndFunction
	\end{algorithmic}
\end{algorithm}

\section{Spectra of 4D almost-Bieberbach groups}

We already determined in \cref{sec:Rinfty4D} which families of four-dimensional al\-most-crys\-tal\-lo\-graphic groups do not have the \(R_\infty\)-property. In \cite{deki96-1} it is determined which groups among these families are almost-Bieberbach groups. We use the presentations from \cref{sec:Rinfty4D}.

\paragraph{Family 1.}  Every group in this family is a finitely generated, torsion-free, nilpotent group of rank \(4\) and nilpotency class \(2\). In \cite[Section 3.2]{dtv18-1} it was shown that the Reidemeister spectrum of such group is always \(4\NN \cup \{\infty\}\).

\paragraph{Family 3.} The almost-Bieberbach groups in this family are those with \((k_1,k_2,k_3,k_4) = (2k,0,0,1)\) for some \(k \in \NN\). An automorphism \(\varphi = \xi_{(d,D)}\) with \(R(\varphi) < \infty\) must be of the form
\begin{align*}
&\varphi(e_1) = e_1^{-1},\\
&\varphi(e_2) = e_1^{l}e_2^{-1},\\
&\varphi(e_3) = e_1^{k(m_1m_2 + m_1d_2 - m_2d_1)}e_3^{m_1}e_4^{m_2},\\
&\varphi(e_4) = e_1^{k(m_3m_4 + m_3d_2 - m_4d_1)}e_3^{m_3}e_4^{m_4},\\
&\varphi(\alpha) = e_1^{kd_1d_2 - 1}e_3^{d_1}e_4^{d_2}\alpha,
\end{align*}
with \(m_1\), \(m_2\), \(m_3\), \(m_4\), \(d_1\), \(d_2\), \(l\) \(\in \ZZ\) and \(m_1m_4 - m_2m_3 = -1\). Then \(D_*\) is of the form
\begin{equation*}
D_* = \begin{pmatrix}
-1 & *  & *& *\\
0 & -1 & * & *\\
0 & 0 & m_1 & m_3\\
0 & 0 & m_2 & m_4
\end{pmatrix}.
\end{equation*}
Using \cref{thm:averagingalmostbieb}, we find that \(R(\varphi) =  4|m_1 + m_4| \in 4\NN\). Now, take the automorphism \(\varphi_m\) given by
\begin{align*}
&\varphi_m(e_1) = e_1^{-1}, &&\varphi_m(e_4) = e_1^{km}e_3e_4^{m},\\
&\varphi_m(e_2) = e_2^{-1}, && \varphi_m(\alpha) = e_1^{- 1}\alpha,\\
&\varphi_m(e_3) = e_4, && 
\end{align*}
with \(m \in \NN\). Then \(R(\varphi_m) = 4m\) and hence \(\Spec_R(\Gamma) = 4\NN \cup \{\infty\}\).

\paragraph{Family 4.} The almost-Bieberbach groups in this family are those where either \((k_1,k_2,k_3,\allowbreak k_4) = (k,0,0,0)\) with \(k \in \NN\) or \((k_1,k_2,k_3,k_4) = (2k,1,0,0)\) with \(k \in \NN\). In the former case, such almost-Bieberbach group can be seen as an internal semidirect product \(H_k \rtimes \ZZ\), where \(H_k = \langle e_1,e_3,e_4\rangle\) and \(\ZZ = \langle \alpha \rangle\). Similarly, in the latter case, a group is an internal semidirect product  \(H_{2k} \rtimes \ZZ\). 

Both of these semidirect products were studied in \cite[Proposition 5.23]{dtv18-1}, their Reidemeister spectra are respectively \(4\NN \cup \{\infty\}\) and \(8\NN \cup \{\infty\}\).

\paragraph{Family 5.} 
The almost-Bieberbach groups in this family are those where \((k_1,k_2,k_3,k_4) = (k,0,0,1)\) with \(k \in \NN\). An automorphism \(\varphi = \xi_{(d,D)}\) with \(R(\varphi) < \infty\) must be of the form
\begin{align*}
&\varphi(e_1) = e_1^{-1},\\
&\varphi(e_2) = e_2^{-1}e_1^{k(2m_1m_2 + 2m_1d_2 - 2m_2d_1 - m_2 - d_2)- 2l},\\
&\varphi(e_3) = e_2^{m_1}e_3^{-1+2m_1}e_4^{m_2}e_1^{l},\\
&\varphi(e_4) = e_2^{m_3}e_3^{2m_3}e_4^{1+2m_4}e_1^{k(2m_3m_4 + m_3d_2 + m_3 - 2m_4d_1 - d_1)},\\
&\varphi(\alpha) = e_2^{d_1}e_3^{2d_1}e_4^{d_2}e_1^{kd_1d_2 - 1}\alpha,
\end{align*}
with \(m_1\), \(m_2\), \(m_3\), \(m_4\), \(d_1\), \(d_2\), \(l\) \(\in \ZZ\) and \(m_1 - m_4 + 2m_1m_4 - m_2m_3 = 0\). Then \(D_*\) is of the form
\begin{equation*}
D_* = \begin{pmatrix}
-1 & * & * & *\\
0 & -1 & * & *\\
0 & 0 & -1+2m_1 & 2m_3\\
0 & 0 & m_2 & 1+2m_4
\end{pmatrix}.
\end{equation*}
Using \cref{thm:averagingalmostbieb}, we find that \(R(\varphi) = 8|m_1+m_4| \in 8\NN \cup \{\infty\}\). Now, take the automorphism \(\varphi_m\) given by
\begin{align*}
&\varphi_m(e_1) = e_1^{-1},&&\varphi_m(e_4) = e_1^{km}e_2^me_3^{2m}e_4,\\
&\varphi_m(e_2) = e_1^{k(2m-1)}e_2^{-1},&&\varphi_m(\alpha) = e_1^{- 1}\alpha,\\
&\varphi_m(e_3) = e_2^me_3^{2m-1}e_4,&&
\end{align*}
with \(m \in \NN\). Then \(R(\varphi_m) = 8m\) and hence \(\Spec_R(\Gamma) = 8\NN \cup \{\infty\}\).

\section{Conclusion}
We have determined which (non-crystallographic) almost-crystallographic groups of dimension \(4\) admit the \(R_\infty\) property, and calculated the Reidemeister spectra of the non-crystallographic 3-dimensional almost-crystallographic groups, as well as the spectra of the non-crystallographic 4-dimensional almost-Bieberbach groups. Together with the results of \cite{dkt17-2}, this completes the calculation of the Reidemeister spectra of the \(3\)-dimensional almost-crystallographic groups and of the \(4\)-dimensional almost-Bieberbach groups.

\paragraph{Acknowledgement} The author would like to thank the referee for their careful reading and useful suggestions for the paper.

\bibliographystyle{plain}
\bibliography{ReidemeisterSpectrum_Tertooy_Sam_AM_Arxiv}

\noindent Sam Tertooy\\
KU Leuven Campus Kulak Kortrijk\\
Etienne Sabbelaan 53\\
8500 Kortrijk\\
Belgium\\[2mm]
Sam.Tertooy\@@kuleuven.be

\begin{table}
	\centering
	\begin{tabular}{c|c|l}
		\(\bar{M}\) & \(\bar{d}\) & \(R\)\\
		\hline
		&&\\[\dimexpr-\normalbaselineskip+2pt]
		\(\begin{smallpmatrix}
		0 & 1\\
		1 & 0
		\end{smallpmatrix}\)&\(\begin{smallpmatrix}
		0 \\ 0
		\end{smallpmatrix}\) &
		\(4\NN+4\)
		\\
		\(\begin{smallpmatrix}
		0 & 1\\
		1 & 0
		\end{smallpmatrix}\)&\(\begin{smallpmatrix}
		0 \\ 1
		\end{smallpmatrix}\) &
		\(4\NN\)
		\\
		\(\begin{smallpmatrix}
		0 & 1\\
		1 & 0
		\end{smallpmatrix}\)&\(\begin{smallpmatrix}
		1 \\ 0
		\end{smallpmatrix}\) &
		\(4\NN\)
		\\
		\(\begin{smallpmatrix}
		0 & 1\\
		1 & 0
		\end{smallpmatrix}\)&\(\begin{smallpmatrix}
		1 \\ 1
		\end{smallpmatrix}\) &
		\(4\NN+4\) 
		\\
		\(\begin{smallpmatrix}
		0 & 1\\
		1 & 1
		\end{smallpmatrix}\)&\(\begin{smallpmatrix}
		0 \\ 0
		\end{smallpmatrix}\) &
		\(4\NN\)
		\\
		\(\begin{smallpmatrix}
		0 & 1\\
		1 & 1
		\end{smallpmatrix}\)&\(\begin{smallpmatrix}
		0 \\ 1
		\end{smallpmatrix}\) &
		\(4\NN\)
		\\
		\(\begin{smallpmatrix}
		0 & 1\\
		1 & 1
		\end{smallpmatrix}\)&\(\begin{smallpmatrix}
		1 \\ 0
		\end{smallpmatrix}\) &
		\(4\NN\) 
		\\
		\(\begin{smallpmatrix}
		0 & 1\\
		1 & 1
		\end{smallpmatrix}\)&\(\begin{smallpmatrix}
		1 \\ 1
		\end{smallpmatrix}\) &
		\(4\NN\) 
		\\
		\(\begin{smallpmatrix}
		1 & 0\\
		0 & 1
		\end{smallpmatrix}\)&\(\begin{smallpmatrix}
		0 \\ 0
		\end{smallpmatrix}\) &
		\(8\NN+8\) 
		\\
		\(\begin{smallpmatrix}
		1 & 0\\
		0 & 1
		\end{smallpmatrix}\)&\(\begin{smallpmatrix}
		0 \\ 1
		\end{smallpmatrix}\) &
		\(8\NN\)
		\\
		\(\begin{smallpmatrix}
		1 & 0\\
		0 & 1
		\end{smallpmatrix}\)&\(\begin{smallpmatrix}
		1 \\ 0
		\end{smallpmatrix}\) &
		\(8\NN\) 
		\\
		\(\begin{smallpmatrix}
		1 & 0\\
		0 & 1
		\end{smallpmatrix}\)&\(\begin{smallpmatrix}
		1 \\ 1
		\end{smallpmatrix}\) &
		\(8\NN\) 
		\\
		\(\begin{smallpmatrix}
		1 & 0\\
		1 & 1
		\end{smallpmatrix}\)&\(\begin{smallpmatrix}
		0 \\ 0
		\end{smallpmatrix}\) &
		\(4\NN+4\) 
		\\
		\(\begin{smallpmatrix}
		1 & 0\\
		1 & 1
		\end{smallpmatrix}\)&\(\begin{smallpmatrix}
		0 \\ 1
		\end{smallpmatrix}\) &
		\(4\NN+4\)
		\\
		\(\begin{smallpmatrix}
		1 & 0\\
		1 & 1
		\end{smallpmatrix}\)&\(\begin{smallpmatrix}
		1 \\ 0
		\end{smallpmatrix}\) &
		\(4\NN\) 
		\\
		\(\begin{smallpmatrix}
		1 & 0\\
		1 & 1
		\end{smallpmatrix}\)&\(\begin{smallpmatrix}
		1 \\ 1
		\end{smallpmatrix}\) &
		\(4\NN\)
		\\
		\(\begin{smallpmatrix}
		1 & 1\\
		0 & 1
		\end{smallpmatrix}\)&\(\begin{smallpmatrix}
		0 \\ 0
		\end{smallpmatrix}\) &
		\(4\NN+4\)
		\\
		\(\begin{smallpmatrix}
		1 & 1\\
		0 & 1
		\end{smallpmatrix}\)&\(\begin{smallpmatrix}
		0 \\ 1
		\end{smallpmatrix}\) &
		\(4\NN\)
		\\
		\(\begin{smallpmatrix}
		1 & 1\\
		0 & 1
		\end{smallpmatrix}\)&\(\begin{smallpmatrix}
		1 \\ 0
		\end{smallpmatrix}\) &
		\(4\NN+4\)
		\\
		\(\begin{smallpmatrix}
		1 & 1\\
		0 & 1
		\end{smallpmatrix}\)&\(\begin{smallpmatrix}
		1 \\ 1
		\end{smallpmatrix}\) &
		\(4\NN\)  
		\\
		\(\begin{smallpmatrix}
		1 & 1\\
		1 & 0
		\end{smallpmatrix}\)&\(\begin{smallpmatrix}
		0 \\ 0
		\end{smallpmatrix}\) &
		\(4\NN\)
		\\
		\(\begin{smallpmatrix}
		1 & 1\\
		1 & 0
		\end{smallpmatrix}\)&\(\begin{smallpmatrix}
		0 \\ 1
		\end{smallpmatrix}\) &
		\(4\NN\)
		\\
		\(\begin{smallpmatrix}
		1 & 1\\
		1 & 0
		\end{smallpmatrix}\)&\(\begin{smallpmatrix}
		1 \\ 0
		\end{smallpmatrix}\) &
		\(4\NN\)
		\\
		\(\begin{smallpmatrix}
		1 & 1\\
		1 & 0
		\end{smallpmatrix}\)&\(\begin{smallpmatrix}
		1 \\ 1
		\end{smallpmatrix}\) &
		\(4\NN\) 
	\end{tabular}
	\caption{\textsc{MakeList}(\(0,0,0,0\))}
	\label{tbl:ACfam0000}
\end{table}

\begin{table}
	\centering
	\begin{tabular}{c|c|l}
		\(\bar{M}\) & \(\bar{d}\) & \(R\)\\
		\hline
		&&\\[\dimexpr-\normalbaselineskip+2pt]
		\(\begin{smallpmatrix}
		0 & 1\\
		1 & 0
		\end{smallpmatrix}\)&\(\begin{smallpmatrix}
		0 \\ 0
		\end{smallpmatrix}\) &
		\(4\NN\)
		\\
		\(\begin{smallpmatrix}
		0 & 1\\
		1 & 0
		\end{smallpmatrix}\)&\(\begin{smallpmatrix}
		0 \\ 1
		\end{smallpmatrix}\) &
		\(4\NN\)
		\\
		\(\begin{smallpmatrix}
		0 & 1\\
		1 & 0
		\end{smallpmatrix}\)&\(\begin{smallpmatrix}
		1 \\ 0
		\end{smallpmatrix}\) &
		\(4\NN\) 
		\\
		\(\begin{smallpmatrix}
		0 & 1\\
		1 & 0
		\end{smallpmatrix}\)&\(\begin{smallpmatrix}
		1 \\ 1
		\end{smallpmatrix}\) &
		\(4\NN\)
		\\
		\(\begin{smallpmatrix}
		0 & 1\\
		1 & 1
		\end{smallpmatrix}\)&\(\begin{smallpmatrix}
		0 \\ 0
		\end{smallpmatrix}\) &
		\(4\NN-2\) 
		\\
		\(\begin{smallpmatrix}
		0 & 1\\
		1 & 1
		\end{smallpmatrix}\)&\(\begin{smallpmatrix}
		0 \\ 1
		\end{smallpmatrix}\) &
		\(4\NN-2\)
		\\
		\(\begin{smallpmatrix}
		0 & 1\\
		1 & 1
		\end{smallpmatrix}\)&\(\begin{smallpmatrix}
		1 \\ 0
		\end{smallpmatrix}\) &
		\(4\NN-2\)
		\\
		\(\begin{smallpmatrix}
		0 & 1\\
		1 & 1
		\end{smallpmatrix}\)&\(\begin{smallpmatrix}
		1 \\ 1
		\end{smallpmatrix}\) &
		\(4\NN-2\)
		\\
		\(\begin{smallpmatrix}
		1 & 0\\
		0 & 1
		\end{smallpmatrix}\)&\(\begin{smallpmatrix}
		0 \\ 0
		\end{smallpmatrix}\) &
		\(8\NN\) 
		\\
		\(\begin{smallpmatrix}
		1 & 0\\
		0 & 1
		\end{smallpmatrix}\)&\(\begin{smallpmatrix}
		0 \\ 1
		\end{smallpmatrix}\) &
		\(8\NN\)
		\\
		\(\begin{smallpmatrix}
		1 & 0\\
		0 & 1
		\end{smallpmatrix}\)&\(\begin{smallpmatrix}
		1 \\ 0
		\end{smallpmatrix}\) &
		\(8\NN\) 
		\\
		\(\begin{smallpmatrix}
		1 & 0\\
		0 & 1
		\end{smallpmatrix}\)&\(\begin{smallpmatrix}
		1 \\ 1
		\end{smallpmatrix}\) &
		\(8\NN\) 
		\\
		\(\begin{smallpmatrix}
		1 & 0\\
		1 & 1
		\end{smallpmatrix}\)&\(\begin{smallpmatrix}
		0 \\ 0
		\end{smallpmatrix}\) &
		\(4\NN\) 
		\\
		\(\begin{smallpmatrix}
		1 & 0\\
		1 & 1
		\end{smallpmatrix}\)&\(\begin{smallpmatrix}
		0 \\ 1
		\end{smallpmatrix}\) &
		\(4\NN\) 
		\\
		\(\begin{smallpmatrix}
		1 & 0\\
		1 & 1
		\end{smallpmatrix}\)&\(\begin{smallpmatrix}
		1 \\ 0
		\end{smallpmatrix}\) &
		\(4\NN\) 
		\\
		\(\begin{smallpmatrix}
		1 & 0\\
		1 & 1
		\end{smallpmatrix}\)&\(\begin{smallpmatrix}
		1 \\ 1
		\end{smallpmatrix}\) &
		\(4\NN\) 
		\\
		\(\begin{smallpmatrix}
		1 & 1\\
		0 & 1
		\end{smallpmatrix}\)&\(\begin{smallpmatrix}
		0 \\ 0
		\end{smallpmatrix}\) &
		\(4\NN\) 
		\\
		\(\begin{smallpmatrix}
		1 & 1\\
		0 & 1
		\end{smallpmatrix}\)&\(\begin{smallpmatrix}
		0 \\ 1
		\end{smallpmatrix}\) &
		\(4\NN\) 
		\\
		\(\begin{smallpmatrix}
		1 & 1\\
		0 & 1
		\end{smallpmatrix}\)&\(\begin{smallpmatrix}
		1 \\ 0
		\end{smallpmatrix}\) &
		\(4\NN\) 
		\\
		\(\begin{smallpmatrix}
		1 & 1\\
		0 & 1
		\end{smallpmatrix}\)&\(\begin{smallpmatrix}
		1 \\ 1
		\end{smallpmatrix}\) &
		\(4\NN\) 
		\\
		\(\begin{smallpmatrix}
		1 & 1\\
		1 & 0
		\end{smallpmatrix}\)&\(\begin{smallpmatrix}
		0 \\ 0
		\end{smallpmatrix}\) &
		\(4\NN-2\) 
		\\
		\(\begin{smallpmatrix}
		1 & 1\\
		1 & 0
		\end{smallpmatrix}\)&\(\begin{smallpmatrix}
		0 \\ 1
		\end{smallpmatrix}\) &
		\(4\NN-2\) 
		\\
		\(\begin{smallpmatrix}
		1 & 1\\
		1 & 0
		\end{smallpmatrix}\)&\(\begin{smallpmatrix}
		1 \\ 0
		\end{smallpmatrix}\) &
		\(4\NN-2\) 
		\\
		\(\begin{smallpmatrix}
		1 & 1\\
		1 & 0
		\end{smallpmatrix}\)&\(\begin{smallpmatrix}
		1 \\ 1
		\end{smallpmatrix}\) &
		\(4\NN-2\) 
	\end{tabular}
	\label{tbl:ACfam0001}
	\caption{\textsc{MakeList}(\(0,0,0,1\))}
\end{table}

\begin{table}
	\centering
	\begin{tabular}{c|c|l}
		\(\bar{M}\) & \(\bar{d}\) & \(R\)\\
		\hline
		&&\\[\dimexpr-\normalbaselineskip+2pt]
		\(\begin{smallpmatrix}
		1 & 0\\
		0 & 1
		\end{smallpmatrix}\)&\(\begin{smallpmatrix}
		0 \\ 0
		\end{smallpmatrix}\) &
		\(8\NN+4\) 
		\\
		\(\begin{smallpmatrix}
		1 & 0\\
		0 & 1
		\end{smallpmatrix}\)&\(\begin{smallpmatrix}
		1 \\ 0
		\end{smallpmatrix}\) &
		\(8\NN\) 
		\\
		\(\begin{smallpmatrix}
		1 & 1\\
		0 & 1
		\end{smallpmatrix}\)&\(\begin{smallpmatrix}
		0 \\ 0
		\end{smallpmatrix}\) &
		\(4\NN+4\) 
		\\
		\(\begin{smallpmatrix}
		1 & 1\\
		0 & 1
		\end{smallpmatrix}\)&\(\begin{smallpmatrix}
		1 \\ 0
		\end{smallpmatrix}\) &
		\(4\NN\) 
		\\
	\end{tabular}
	\label{tbl:ACfam0010}
	\caption{\textsc{MakeList}(\(0,0,1,0\))}
\end{table}
\begin{table}
	\centering
	\begin{tabular}{c|c|l}
		\(\bar{M}\) & \(\bar{d}\) & \(R\)\\
		\hline
		&&\\[\dimexpr-\normalbaselineskip+2pt]
		\(\begin{smallpmatrix}
		1 & 0\\
		0 & 1
		\end{smallpmatrix}\)&\(\begin{smallpmatrix}
		0 \\ 0
		\end{smallpmatrix}\) &
		\(8\NN+4\) 
		\\
		\(\begin{smallpmatrix}
		1 & 0\\
		0 & 1
		\end{smallpmatrix}\)&\(\begin{smallpmatrix}
		1 \\ 0
		\end{smallpmatrix}\) &
		\(8\NN\)
		\\
		\(\begin{smallpmatrix}
		1 & 1\\
		0 & 1
		\end{smallpmatrix}\)&\(\begin{smallpmatrix}
		0 \\ 0
		\end{smallpmatrix}\) &
		\(4\NN\) 
		\\
		\(\begin{smallpmatrix}
		1 & 1\\
		0 & 1
		\end{smallpmatrix}\)&\(\begin{smallpmatrix}
		1 \\ 0
		\end{smallpmatrix}\) &
		\(4\NN+4\) 
		\\
	\end{tabular}
	\label{tbl:ACfam0011}
	\caption{\textsc{MakeList}(\(0,0,1,1\))}
\end{table}

\begin{table}
	\centering
	\begin{tabular}{c|c|l}
		\(\bar{M}\) & \(\bar{d}\) & \(R\)\\
		\hline
		&&\\[\dimexpr-\normalbaselineskip+2pt]
		\(\begin{smallpmatrix}
		0 & 1\\
		1 & 0
		\end{smallpmatrix}\)&\(\begin{smallpmatrix}
		0 \\ 0
		\end{smallpmatrix}\) &
		\(4\NN+4\)
		\\
		\(\begin{smallpmatrix}
		0 & 1\\
		1 & 0
		\end{smallpmatrix}\)&\(\begin{smallpmatrix}
		1 \\ 1
		\end{smallpmatrix}\) &
		\(4\NN\)  
		\\
		\(\begin{smallpmatrix}
		1 & 0\\
		0 & 1
		\end{smallpmatrix}\)&\(\begin{smallpmatrix}
		0 \\ 0
		\end{smallpmatrix}\) &
		\(8\NN+4\)  
		\\
		\(\begin{smallpmatrix}
		1 & 0\\
		0 & 1
		\end{smallpmatrix}\)&\(\begin{smallpmatrix}
		1 \\ 1
		\end{smallpmatrix}\) &
		\(8\NN\)  
		\\
	\end{tabular}
	\label{tbl:ACfam0110}
	\caption{\textsc{MakeList}(\(0,1,1,0\))}
\end{table}
\begin{table}
	\centering
	\begin{tabular}{c|c|l}
		\(\bar{M}\) & \(\bar{d}\) & \(R\)\\
		\hline
		&&\\[\dimexpr-\normalbaselineskip+2pt]
		\(\begin{smallpmatrix}
		0 & 1\\
		1 & 0
		\end{smallpmatrix}\)&\(\begin{smallpmatrix}
		0 \\ 0
		\end{smallpmatrix}\) &
		\(4\NN\)  
		\\
		\(\begin{smallpmatrix}
		0 & 1\\
		1 & 0
		\end{smallpmatrix}\)&\(\begin{smallpmatrix}
		1 \\ 1
		\end{smallpmatrix}\) &
		\(4\NN+4\) 
		\\
		\(\begin{smallpmatrix}
		1 & 0\\
		0 & 1
		\end{smallpmatrix}\)&\(\begin{smallpmatrix}
		0 \\ 0
		\end{smallpmatrix}\) &
		\(8\NN+4\)  
		\\
		\(\begin{smallpmatrix}
		1 & 0\\
		0 & 1
		\end{smallpmatrix}\)&\(\begin{smallpmatrix}
		1 \\ 1
		\end{smallpmatrix}\) &
		\(8\NN\)  
		\\
	\end{tabular}
	\label{tbl:ACfam0111}
	\caption{\textsc{MakeList}(\(0,1,1,1\))}
\end{table}

\begin{table}
	\centering
	\begin{tabular}{c|c|l}
		\(\bar{M}\) & \(\bar{d}\) & \(R\)\\
		\hline
		&&\\[\dimexpr-\normalbaselineskip+2pt]
		\(\begin{smallpmatrix}
		0 & 1\\
		1 & 0
		\end{smallpmatrix}\)&\(\begin{smallpmatrix}
		0 \\ 0
		\end{smallpmatrix}\) &
		\(4\NN+2\)  
		\\
		\(\begin{smallpmatrix}
		0 & 1\\
		1 & 1
		\end{smallpmatrix}\)&\(\begin{smallpmatrix}
		0 \\ 1
		\end{smallpmatrix}\) &
		\(4\NN-2\)  
		\\
		\(\begin{smallpmatrix}
		1 & 0\\
		0 & 1
		\end{smallpmatrix}\)&\(\begin{smallpmatrix}
		0 \\ 0
		\end{smallpmatrix}\) &
		\(8\NN+6\)  
		\\
		\(\begin{smallpmatrix}
		1 & 0\\
		1 & 1
		\end{smallpmatrix}\)&\(\begin{smallpmatrix}
		0 \\ 1
		\end{smallpmatrix}\) &
		\(4\NN+2\)  
		\\
		\(\begin{smallpmatrix}
		1 & 1\\
		0 & 1
		\end{smallpmatrix}\)&\(\begin{smallpmatrix}
		1 \\ 0
		\end{smallpmatrix}\) &
		\(4\NN+2\)  
		\\
		\(\begin{smallpmatrix}
		1 & 1\\
		1 & 0
		\end{smallpmatrix}\)&\(\begin{smallpmatrix}
		1 \\ 0
		\end{smallpmatrix}\) &
		\(4\NN-2\)  
		\\
	\end{tabular}
	\label{tbl:ACfam1000}
	\caption{\textsc{MakeList}(\(1,0,0,0\))}
\end{table}
\begin{table}
	\centering
	\begin{tabular}{c|c|l}
		\(\bar{M}\) & \(\bar{d}\) & \(R\)\\
		\hline
		&&\\[\dimexpr-\normalbaselineskip+2pt]
		\(\begin{smallpmatrix}
		0 & 1\\
		1 & 0
		\end{smallpmatrix}\)&\(\begin{smallpmatrix}
		0 \\ 0
		\end{smallpmatrix}\) &
		\(4\NN+2\) 
		\\
		\(\begin{smallpmatrix}
		0 & 1\\
		1 & 1
		\end{smallpmatrix}\)&\(\begin{smallpmatrix}
		0 \\ 1
		\end{smallpmatrix}\) &
		\(4\NN\) 
		\\
		\(\begin{smallpmatrix}
		1 & 0\\
		0 & 1
		\end{smallpmatrix}\)&\(\begin{smallpmatrix}
		0 \\ 0
		\end{smallpmatrix}\) &
		\(8\NN+2\) 
		\\
		\(\begin{smallpmatrix}
		1 & 0\\
		1 & 1
		\end{smallpmatrix}\)&\(\begin{smallpmatrix}
		0 \\ 1
		\end{smallpmatrix}\) &
		\(4\NN+2\)  
		\\
		\(\begin{smallpmatrix}
		1 & 1\\
		0 & 1
		\end{smallpmatrix}\)&\(\begin{smallpmatrix}
		1 \\ 0
		\end{smallpmatrix}\) &
		\(4\NN+2\)  
		\\
		\(\begin{smallpmatrix}
		1 & 1\\
		1 & 0
		\end{smallpmatrix}\)&\(\begin{smallpmatrix}
		1 \\ 0
		\end{smallpmatrix}\) &
		\(4\NN\) 
		\\
	\end{tabular}
	\label{tbl:ACfam1001}
	\caption{\textsc{MakeList}(\(1,0,0,1\))}
\end{table}

\begin{table}
	\centering
	\begin{tabular}{c|c|l}
		\(\bar{M}\) & \(\bar{d}\) & \(R\)\\
		\hline
		&&\\[\dimexpr-\normalbaselineskip+2pt]
		\(\begin{smallpmatrix}
		0 & 1\\
		1 & 0
		\end{smallpmatrix}\)&\(\begin{smallpmatrix}
		1 \\ 1
		\end{smallpmatrix}\) &
		\(4\NN+2\)  
		\\
		\(\begin{smallpmatrix}
		0 & 1\\
		1 & 1
		\end{smallpmatrix}\)&\(\begin{smallpmatrix}
		1 \\ 0
		\end{smallpmatrix}\) &
		\(4\NN-2\)  
		\\
		\(\begin{smallpmatrix}
		1 & 0\\
		0 & 1
		\end{smallpmatrix}\)&\(\begin{smallpmatrix}
		0 \\ 0
		\end{smallpmatrix}\) &
		\(8\NN+6\)  
		\\
		\(\begin{smallpmatrix}
		1 & 0\\
		1 & 1
		\end{smallpmatrix}\)&\(\begin{smallpmatrix}
		0 \\ 0
		\end{smallpmatrix}\) &
		\(4\NN+2\)  
		\\
		\(\begin{smallpmatrix}
		1 & 1\\
		0 & 1
		\end{smallpmatrix}\)&\(\begin{smallpmatrix}
		1 \\ 0
		\end{smallpmatrix}\) &
		\(4\NN+2\)  
		\\
		\(\begin{smallpmatrix}
		1 & 1\\
		1 & 0
		\end{smallpmatrix}\)&\(\begin{smallpmatrix}
		1 \\ 1
		\end{smallpmatrix}\) &
		\(4\NN-2\)  
		\\
	\end{tabular}
	\label{tbl:ACfam1010}
	\caption{\textsc{MakeList}(\(1,0,1,0\))}
\end{table}
\begin{table}
	\centering
	\begin{tabular}{c|c|l}
		\(\bar{M}\) & \(\bar{d}\) & \(R\)\\
		\hline
		&&\\[\dimexpr-\normalbaselineskip+2pt]
		\(\begin{smallpmatrix}
		0 & 1\\
		1 & 0
		\end{smallpmatrix}\)&\(\begin{smallpmatrix}
		1 \\ 1
		\end{smallpmatrix}\) &
		\(4\NN+2\)  
		\\
		\(\begin{smallpmatrix}
		0 & 1\\
		1 & 1
		\end{smallpmatrix}\)&\(\begin{smallpmatrix}
		1 \\ 0
		\end{smallpmatrix}\) &
		\(4\NN\)  
		\\
		\(\begin{smallpmatrix}
		1 & 0\\
		0 & 1
		\end{smallpmatrix}\)&\(\begin{smallpmatrix}
		0 \\ 0
		\end{smallpmatrix}\) &
		\(8\NN+2\)  
		\\
		\(\begin{smallpmatrix}
		1 & 0\\
		1 & 1
		\end{smallpmatrix}\)&\(\begin{smallpmatrix}
		0 \\ 0
		\end{smallpmatrix}\) &
		\(4\NN+2\)  
		\\
		\(\begin{smallpmatrix}
		1 & 1\\
		0 & 1
		\end{smallpmatrix}\)&\(\begin{smallpmatrix}
		1 \\ 0
		\end{smallpmatrix}\) &
		\(4\NN+2\)  
		\\
		\(\begin{smallpmatrix}
		1 & 1\\
		1 & 0
		\end{smallpmatrix}\)&\(\begin{smallpmatrix}
		1 \\ 1
		\end{smallpmatrix}\) &
		\(4\NN\)  
		\\
	\end{tabular}
	\label{tbl:ACfam1011}
	\caption{\textsc{MakeList}(\(1,0,1,1\))}
\end{table}

\begin{table}
	\centering
	\begin{tabular}{c|c|l}
		\(\bar{M}\) & \(\bar{d}\) & \(R\)\\
		\hline
		&&\\[\dimexpr-\normalbaselineskip+2pt]
		\(\begin{smallpmatrix}
		0 & 1\\
		1 & 0
		\end{smallpmatrix}\)&\(\begin{smallpmatrix}
		0 \\ 0
		\end{smallpmatrix}\) &
		\(4\NN+2\)  
		\\
		\(\begin{smallpmatrix}
		0 & 1\\
		1 & 1
		\end{smallpmatrix}\)&\(\begin{smallpmatrix}
		0 \\ 0
		\end{smallpmatrix}\) &
		\(4\NN\) 
		\\
		\(\begin{smallpmatrix}
		1 & 0\\
		0 & 1
		\end{smallpmatrix}\)&\(\begin{smallpmatrix}
		0 \\ 0
		\end{smallpmatrix}\) &
		\(8\NN+2\)  
		\\
		\(\begin{smallpmatrix}
		1 & 0\\
		1 & 1
		\end{smallpmatrix}\)&\(\begin{smallpmatrix}
		0 \\ 0
		\end{smallpmatrix}\) &
		\(4\NN+2\) 
		\\
		\(\begin{smallpmatrix}
		1 & 1\\
		0 & 1
		\end{smallpmatrix}\)&\(\begin{smallpmatrix}
		0 \\ 0
		\end{smallpmatrix}\) &
		\(4\NN+2\)  
		\\
		\(\begin{smallpmatrix}
		1 & 1\\
		1 & 0
		\end{smallpmatrix}\)&\(\begin{smallpmatrix}
		0 \\ 0
		\end{smallpmatrix}\) &
		\(4\NN\)  
		\\
	\end{tabular}
	\label{tbl:ACfam1110}
	\caption{\textsc{MakeList}(\(1,1,1,0\))}
\end{table}
\begin{table}
	\centering
	\begin{tabular}{c|c|l}
		\(\bar{M}\) & \(\bar{d}\) & \(R\)\\
		\hline
		&&\\[\dimexpr-\normalbaselineskip+2pt]
		\(\begin{smallpmatrix}
		0 & 1\\
		1 & 0
		\end{smallpmatrix}\)&\(\begin{smallpmatrix}
		0 \\ 0
		\end{smallpmatrix}\) &
		\(4\NN+2\)  
		\\
		\(\begin{smallpmatrix}
		0 & 1\\
		1 & 1
		\end{smallpmatrix}\)&\(\begin{smallpmatrix}
		0 \\ 0
		\end{smallpmatrix}\) &
		\(4\NN-2\)  
		\\
		\(\begin{smallpmatrix}
		1 & 0\\
		0 & 1
		\end{smallpmatrix}\)&\(\begin{smallpmatrix}
		0 \\ 0
		\end{smallpmatrix}\) &
		\(8\NN+6\)  
		\\
		\(\begin{smallpmatrix}
		1 & 0\\
		1 & 1
		\end{smallpmatrix}\)&\(\begin{smallpmatrix}
		0 \\ 0
		\end{smallpmatrix}\) &
		\(4\NN+2\)  
		\\
		\(\begin{smallpmatrix}
		1 & 1\\
		0 & 1
		\end{smallpmatrix}\)&\(\begin{smallpmatrix}
		0 \\ 0
		\end{smallpmatrix}\) &
		\(4\NN+2\) 
		\\
		\(\begin{smallpmatrix}
		1 & 1\\
		1 & 0
		\end{smallpmatrix}\)&\(\begin{smallpmatrix}
		0 \\ 0
		\end{smallpmatrix}\) &
		\(4\NN-2\) 
		\\
	\end{tabular}
	\caption{\textsc{MakeList}(\(1,1,1,1\))}
	\label{tbl:ACfam1111}
\end{table}

\begin{table}
	\small
	\centering
	\begin{tabular}{c|c|c|l|l}
		\((k_1,k_2,k_3,k_4)\) &\(M\) & \(d\)& \(R(\varphi)\) & \(\Spec_R(\Gamma)\) \\
		\hline
		&&&&\\[\dimexpr-\normalbaselineskip+2pt]
		\((0,0,0,0)\) & \(\begin{smallpmatrix}
		0 & 1\\
		1 & 2m
		\end{smallpmatrix}\)&\(\begin{smallpmatrix}
		0 \\ 1
		\end{smallpmatrix}\) &
		\(4m\) & \(4\NN\) 
		\\
		\((0,0,0,1)\) & \(\begin{smallpmatrix}
		0 & 1\\
		1 & m
		\end{smallpmatrix}\)&\(\begin{smallpmatrix}
		0 \\ 0
		\end{smallpmatrix}\) &
		\( 2m\) &  \(2\NN \)
		\\
		\((0,0,1,0)\) & \(\begin{smallpmatrix}
		1 & 1\\
		2m & 2m-1
		\end{smallpmatrix}\)&\(\begin{smallpmatrix}
		1 \\ 0
		\end{smallpmatrix}\) &
		\(4m\) & \(4\NN\) 
		\\
		\((0,0,1,1)\) & \(\begin{smallpmatrix}
		1 & 1\\
		2m & 2m-1
		\end{smallpmatrix}\)&\(\begin{smallpmatrix}
		0 \\ 0
		\end{smallpmatrix}\) &
		\(4m\)  &  \(4\NN\)
		\\
		\((0,1,1,0)\) & \(\begin{smallpmatrix}
		0 & 1\\
		1 & 2m
		\end{smallpmatrix}\)&\(\begin{smallpmatrix}
		1 \\ 1
		\end{smallpmatrix}\) &
		\( 4m\) & \(4\NN\)
		\\
		\((0,1,1,1)\) & \(\begin{smallpmatrix}
		0 & 1\\
		1 & 2m
		\end{smallpmatrix}\)&\(\begin{smallpmatrix}
		0 \\ 0
		\end{smallpmatrix}\) &
		\( 4m\) & \(4\NN\)
		\\
		\((1,0,0,0)\) & \(\begin{smallpmatrix}
		0 & 1\\
		1 & 2m-1
		\end{smallpmatrix}\)&\(\begin{smallpmatrix}
		0 \\ 1
		\end{smallpmatrix}\) &
		\(4m-2\) & \(4\NN-2 \)
		\\
		\((1,0,0,1)\) & \(\begin{smallpmatrix}
		1 & 1\\
		m & m-1
		\end{smallpmatrix}\)&\(\begin{smallpmatrix}
		1 \\ 0
		\end{smallpmatrix}\) &
		\(2m+2\)  &\(2\NN+2 \)
		\\
		\((1,0,1,0)\) & \(\begin{smallpmatrix}
		0 & 1\\
		1 & 2m-1
		\end{smallpmatrix}\)&\(\begin{smallpmatrix}
		1 \\ 0
		\end{smallpmatrix}\) &
		\( 4m-2\) & \(4\NN-2\)
		\\
		\((1,0,1,1)\) & \(\begin{smallpmatrix}
		m & 1\\
		1 & 0
		\end{smallpmatrix}\)&\(\begin{smallpmatrix}
		1 \\ 1
		\end{smallpmatrix}\) &
		\( 2m+2\) & \(2\NN+2 \)
		\\
		\((1,1,1,0)\) & \(\begin{smallpmatrix}
		0 & 1\\
		1 & m
		\end{smallpmatrix}\)&\(\begin{smallpmatrix}
		0 \\ 0
		\end{smallpmatrix}\) &
		\(2m+2\) &\(2\NN+2\)
		\\
		\((1,1,1,1)\) & \(\begin{smallpmatrix}
		0 & 1\\
		1 & 2m-1
		\end{smallpmatrix}\)&\(\begin{smallpmatrix}
		0 \\ 0
		\end{smallpmatrix}\) &
		\( 4m-2\) &\(4\NN-2 \)
		\\
	\end{tabular}
	\caption{Automorphisms and Reidemeister spectra and for all \((k_1,k_2,k_3,k_4)\)}
	\label{tbl:ACspectra}
\end{table}
\renewcommand{\arraystretch}{1}
\end{document}